\newtheorem{definition}{Definition}[section]
\newtheorem{theorem}{Theorem}[section]
\newtheorem{proposition}{Proposition}[section]
\newtheorem{lemma}{Lemma}[section]
\newtheorem{remark}{Remark}[section]
\newcommand{\R}{\mathbb{R}}
\newcommand{\ity}{\infty}
\newcommand{\dps}{\displaystyle}
\newcommand{\f}{\displaystyle\frac}
\begin{document}
\title[Semi-linear $\sigma$-evolution equations with different damping types]{Effect of additional regularity for the initial data on semi-linear $\sigma$-evolution equations with different damping types}

\subjclass{35A01, 35B33, 35L71}
\keywords{$\sigma$-evolution equations, Double damping, Additional regularity, Critical exponent.}
\thanks{$^* $\textit{Corresponding author:} Dinh Van Duong (vanmath2002@gmail.com)}

\maketitle
\centerline{\scshape Dinh Van Duong$^{1, *}$, Tuan Anh Dao$^{1}$}
\medskip
{\footnotesize
	\centerline{$^1$ Faculty of Mathematics and Informatics, Hanoi University of Science and Technology}
	\centerline{No.1 Dai Co Viet road, Hanoi, Vietnam}}

\begin{abstract}
   In this paper, we would like to study the critical exponent for semi-linear $\sigma$-evolution equations with different damping types under the influence of additional regularity for the initial data. On the one hand, we establish the existence of global (in time) solutions for small initial data and the blow-up in finite time solutions in the supercritical case and the subcritical case, respectively. The very interesting phenomenon is that the critical case belonging to the global solution range or the blow-up solution range depends heavily on the assumption of additional regularity for the initial data. Furthermore, we are going to provide lifespan estimates for solutions when the blow-up phenomenon occurs.
\end{abstract}

\tableofcontents

\section{Introduction}
Let us consider the following Cauchy problem for semi-linear $\sigma$-evolution equations with different damping types:
\begin{equation} \label{Main.Eq.1}
\begin{cases}
u_{tt}+ (-\Delta)^\sigma u+ (-\Delta)^{\sigma_1} u_t+ (-\Delta)^{\sigma_2} u_t= |u|^p, &\quad x\in \R^n,\, t > 0, \\
u(0,x)= \varepsilon u_0(x),\quad u_t(0,x)= \varepsilon u_1(x), &\quad x\in \R^n, \\
\end{cases}
\end{equation}
where $\varepsilon$ is a positive real number, $\sigma\ge 1$ is assumed to be any fractional number and $0 \leq \sigma_1 < \sigma/2< \sigma_2 \leq \sigma$. The parameter $p>1$ represents the exponent of the power nonlinear term. The homogeneous Cauchy problem corresponding to the problem (\ref{Main.Eq.1}) we have in mind is
\begin{equation} \label{Main.Eq.2}
\begin{cases}
u_{tt}+ (-\Delta)^\sigma u+ (-\Delta)^{\sigma_1} u_t+ (-\Delta)^{\sigma_2} u_t= 0, &\quad x\in \R^n,\, t > 0, \\
u(0,x)= \varepsilon u_0(x),\quad u_t(0,x)= \varepsilon u_1(x), &\quad x\in \R^n.
\end{cases}
\end{equation}
The term $(-\Delta)^{\sigma}$ is referred to the $\sigma$-th power of the Laplace operator and the parameter $\sigma$ may be non-integer. In the non-integer case, it is determined by
$$ (-\Delta)^{\sigma} \psi (x) = \frak{F}^{-1}_{\xi \to x}\big(|\xi|^{2\sigma} \frak{F}_{x \to \xi}(\psi)(\xi)\big)(x), $$
where $\psi \in H^{2\sigma}$ and $\frak{F}$, $\frak{F}^{-1}$ denotes the Fourier transform, the inverse Fourier transform, respectively. The term $(-\Delta)^{\sigma} u$, with $\sigma \geq 1$, is known as a $\sigma$-evolution wave or a higher-order wave. The term $(-\Delta)^{\theta} u_t$ with $\theta \geq 0$ is called a damping term. More precisely, it is often referred to as the frictional (or external) damping, the parabolic like damping, the $\sigma$-evolution like damping and the visco-elastic (or strong) damping when $\theta = 0$, \,$\theta = \sigma_1 \in (0, \sigma/2)$,\, $\theta = \sigma_2 \in (\sigma/2, \sigma)$ and $\theta = \sigma$, respectively. \medskip

Let us review some historical perspectives on the study of the following Cauchy problem for damped $\sigma$-evolution equations:
\begin{align}
    \begin{cases}
        u_{tt} + (-\Delta)^{\sigma} u + (-\Delta)^{\theta} u_t = \mu |u|^p,  & x \in \mathbb{R}^n,\, t > 0,\\
        u(0,x) = u_0(x) ,\quad u_t(0,x) = u_1(x), &  x \in \mathbb{R}^n,
    \end{cases}\label{Eq.3}
\end{align}
where $\sigma \geq 1$, $\theta \in [0, \sigma]$ and $\mu \geq 0$. Matsumura \cite{Matsumura1976} was the first to establish the basic decay estimates for the problem (\ref{Eq.3}) with $(\sigma, \theta, \mu) = (1, 0, 0)$, also known as the damped wave equation. Subsequently, it has been established that the damped wave equation has a diffusive structure as $t \to \infty$. In recent years, many mathematicians have focused on studying a typical nonlinear problem of the semi-linear damped wave equation, which is the problem (\ref{Eq.3}) with $(\sigma, \theta, \mu) = (1,0, 1)$, for example, \cite{TodorovaYordanov2001, Ikeda2019, IkehataOhta2002, Hayashi2004, Hayashi2006, Hayashi2017}. The paper \cite{TodorovaYordanov2001} identified that the critical exponent of this problem for small initial data
 $$ (u_0, u_1) \in (H^1 \cap L^1) \times (L^2 \cap L^1) $$
is $p_{\rm Fuj} = 1+2/n$, also known as the Fujita critical exponent. Here, the critical exponent $p_{\rm crit}$ is understood as the threshold between global (in time) existence of small data weak solution and blow-up of solutions even for small data. Expanding the data space
 $$ (H^1 \cap L^m) \times (L^2 \cap L^m) $$
 for $m \in [1,2]$, the authors of the cited papers proved the global existence with small data when $p > 1+ 2m/n$, blow-up solution when $p \leq 1 + 2/n$ if $m = 1$ and $p < 1+ 2m/n$ if $m \in (1, 2]$. The investigation of the solution properties of the problem when the exponent of the nonlinear term $|u|^p$ takes the critical values is also of interest. The paper \cite{NakaoOno1993} studied the case $m = 2$ and they prove the global well-posedness with small data when $p \geq 1 + 4/n$. The paper \cite{Ikeda2019} extended this result by proving the existence of global mild solutions when $p \geq 1+ 2m/n$  along with some other conditions and the parameter $m$ satisfies 
  \begin{align*}
      m \in \left(\max\left\{1,\frac{\sqrt{n^2 + 16n}-n}{4}\right\}, 2\right].
  \end{align*}
For the generalization of the problem (\ref{Eq.3}), we have a series of papers \cite{DAbbiccoReissig2014, DAbbiccoEbert2021, DAbbiccoEbert2022, DuongKainaneReissig2015, DaoReissig2019_1, DaoReissig2019_2, DaoReissig2021, IkehataTakeda2019}. The papers \cite{DaoReissig2019_1, DaoReissig2019_2} obtained $L^1$ estimates for the solution to (\ref{Eq.3}) by using the theory of modified Bessel functions combined with Fa\`a di Bruno’s formula. However, this approach is only suitable for ``parabolic like model" corresponding to $\theta \in (0, \sigma/2)$ and ``$\sigma$-evolution-like model" corresponding to
$\theta \in (\sigma/2, \sigma)$. It is not effective when we consider the case of strongly damped $\sigma$-evolution equations, that is, $\theta = \sigma$. The authors of \cite{DAbbiccoEbert2021} have addressed this issue. They obtained optimal $L^m-L^q$ estimates, with $1\leq m \leq q \leq \infty$, for the solution of the linear problem by treating separately the two components of the solution, the oscillatory one and the diffusive one, in a particular $t$-dependent zone of the frequency space.  Additionally, they also identified that the critical exponent of the problem (\ref{Eq.3}) is
$$ p_{\rm crit}(m,\sigma,\theta): = 1+ \dfrac{2m\sigma}{n-m\min\{2\theta,\sigma\}} $$
by proving the global existence of solutions when $p > p_{\rm crit}(m,\sigma,\theta)$ under some additional conditions, and the existence of blow-up solutions when $p < p_{\rm crit}(m,\sigma,\theta)$ with small initial data in an appropriate space. Speaking about the value $p= p_{\rm crit}(m,\sigma,\theta)$, we can say that it has only been proved to belong to the blow-up range when $m = 1$, meanwhile, it still remains an open problem for any $m> 1$ so far. \medskip

Turning to our main problem (\ref{Main.Eq.1}), the critical exponent in some specific cases has also been found in several previous papers, such as \cite{IkehataTakeda2017} with $\sigma = 1$, $\sigma_1 = 0$ and $\sigma_2 = 1$,  \cite{DaoMichihisa2020} with $\sigma \geq 1$, $\sigma_1 = 0$ and $\sigma_2 =\sigma$, and \cite{ChenDAbbiccoGirardi2022} with $\sigma = 1$, $\sigma_1 \in [0, 1/2)$ and $\sigma_2 \in (1/2, 1]$. These papers are based on \cite{DaoDuongNguyen2024} and follows the method used in \cite{Ikeda2019}. In \cite{DaoDuongNguyen2024}, they achieved optimal decay rates of solutions to the linear problem (\ref{Main.Eq.2}) in the $L^2$ setting as well as indicated the global (in time) existence of small data solutions to (\ref{Main.Eq.1}) when
$$ p > 1+ \dfrac{2m\sigma}{n-2m\sigma_1} $$
with $m \in [1,2)$.  Moreover, the interaction between ``parabolic-like models" and ``$\sigma$-evolution-like models" in (\ref{Main.Eq.1}) and (\ref{Main.Eq.2}) is thoroughly analyzed in \cite{DaoDuongNguyen2024}. Regarding the purpose of this paper, on the one hand we aim to establish that the value
$$ p_{\rm crit}(m,\sigma,\sigma_1): = 1+ \dfrac{2m\sigma}{n-2m\sigma_1} $$
is exactly the critical exponent for the problem (\ref{Main.Eq.1}). On the other hand, we want to study the impact of the regularity condition $L^m$, with $m \geq 1$, on the solution properties to (\ref{Main.Eq.1}) when the exponent $p$ takes the corresponding critical values. Specifically, we concluded that when $m=1$, the critical exponent $p_{\rm crit}(1,\sigma,\sigma_1)$ belongs to the blow-up range, but when $m \in (1,2]$, the critical exponent $p_{\rm crit}(m,\sigma,\sigma_1)$ belongs to the global existence range in some low-dimensional spaces. This change occurs because the  nonlinearity $|u|^p$  decays faster than the linear part at the spatial infinity. More precisely, we can see that $|u|^p \in L^{m_1}$ (presented in Section \ref{section3}) with $m_1: = \max\{1, 2/p\} < m$, however, the linear part of the solution belongs to the space $L^m$. Therefore, we can control the nonlinearity even when the exponent $p$ takes the critical value. Additionally, the estimates for lifespan of blow-up solutions to (\ref{Main.Eq.1}) are also addressed in this work. \medskip
 
\textbf{Notations} \medskip

\begin{itemize}[leftmargin=*]
\item We write $f\lesssim g$ when there exists a constant $C>0$ such that $f\le Cg$, and $f \sim g$ when $g\lesssim f\lesssim g$.
\item As usual, $H^{a}$ and $\dot{H}^{a}$, with $a \ge 0$, denote Bessel and Riesz potential spaces based on $L^2$ spaces. Here $\big<D\big>^{a}$ and $|D|^{a}$ stand for the differential operators with symbols $\big<\xi\big>^{a}$ and $|\xi|^{a}$, respectively.
\item For any $\gamma \in \R$, we denote by $[\gamma]^+:= \max\{\gamma,0\}$, its positive part.
\item Finally, we introduce the space
$\mathcal{D}_{\sigma}^m:= \big(H^{\sigma}\cap L^m\big) \times \big(L^2\cap L^m\big)$ with the norm
$$\|(\phi_0,\phi_1)\|_{\mathcal{D}_{\sigma}^m}:=\|\phi_0\|_{H^{\sigma}}+ \|\phi_0\|_{L^m}+ \|\phi_1\|_{L^2}+ \|\phi_1\|_{L^m} \quad \text{ with } m\in [1,2]. $$
\end{itemize}
\medskip

\textbf{Main results} \medskip

Let us state the local and global (in time) existence of small data solutions, along with the blow-up results, which will be proved in this paper. \medskip

\begin{theorem}[\textbf{Local existence}]\label{Local-existence}
    Let $n \geq 1$ and $0 \leq \sigma_1 < \sigma/2 < \sigma_2 \leq \sigma$. We assume that the parameter $m$ satisfies
    \begin{align} \label{condition_Local_1.1.1}
      1 < m \begin{cases}
          \leq 2 &\text{ if } \sigma_1 = 0,\\
          < \displaystyle\frac{2n}{n +4\sigma_1} &\text{ if } \sigma_1 > 0,
      \end{cases}  
    \end{align}
    and the exponent $p$ satisfies the following conditions:
    \begin{align}\label{condition_Local_1.2}
        \displaystyle\frac{2}{m} < p
        \begin{cases}
        \vspace{0.2cm}
            < \infty &\text{ if } n \leq 2\sigma,\\
            \leq \displaystyle\frac{n}{n -2\sigma} &\text{ if } 2\sigma < n < \displaystyle\frac{4\sigma}{2-m}.
        \end{cases}
    \end{align}
    Moreover, if the initial data $(u_0, u_1) \in \mathcal{D}_{\sigma}^m$, then there exists $T > 0 $ and a constant $\varepsilon_0 > 0$ such that for any $\varepsilon \in (0, \varepsilon_0]$, the Cauchy problem \eqref{Main.Eq.1} admits a unique local (in time) solution belonging to the class
\begin{equation*}
    u \in \mathcal{C}\left([0, T), H^{\sigma}\right).
\end{equation*}
\end{theorem}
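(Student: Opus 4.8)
The plan is to solve \eqref{Main.Eq.1} by the Banach fixed point theorem applied to its Duhamel reformulation. Writing $E_0(t)$ and $E_1(t)$ for the solution operators of the linear homogeneous problem \eqref{Main.Eq.2} (so that its solution is $u^{\mathrm{lin}}(t)=E_0(t)u_0+E_1(t)u_1$), a function $u$ solves \eqref{Main.Eq.1} if and only if it is a fixed point of
$$ N[u](t):=\varepsilon E_0(t)u_0+\varepsilon E_1(t)u_1+\int_0^t E_1(t-\tau)\,|u(\tau)|^p\,\rmd\tau. $$
I would work in the complete metric space $X(T):=\mathcal{C}\big([0,T],H^{\sigma}\big)$ with norm $\|u\|_{X(T)}:=\sup_{0\le t\le T}\|u(t)\|_{H^{\sigma}}$, and seek the solution in a closed ball $B_R(T):=\{u\in X(T):\|u\|_{X(T)}\le R\}$, with $R$ and $T$ fixed only at the end.

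First I would record the linear ingredients. On the bounded interval $[0,T]$ one needs only the \emph{time-uniform} mapping properties
$$ \sup_{0\le t\le T}\|E_0(t)u_0\|_{H^{\sigma}}\lesssim \|u_0\|_{H^{\sigma}},\qquad \sup_{0\le t\le T}\|E_1(t)g\|_{H^{\sigma}}\lesssim \|g\|_{L^2}, $$
which I would verify by inspecting the Fourier multipliers built from the characteristic roots $\lambda_\pm(\xi)$ of $\lambda^2+(|\xi|^{2\sigma_1}+|\xi|^{2\sigma_2})\lambda+|\xi|^{2\sigma}=0$: since $\sigma_2>\sigma/2$, the symbol $\langle\xi\rangle^{\sigma}\widehat{E_1}(t,\xi)$ stays bounded as $|\xi|\to\infty$ (the factor $|\xi|^{\sigma-2\sigma_2}\to 0$ supplies the gain of $\sigma$ derivatives), while near $\xi=0$ it behaves like $t\le T$ and on compact frequency sets it is continuous, hence bounded; the $E_0$ multiplier is likewise uniformly bounded. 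This yields $\|\varepsilon E_0(t)u_0+\varepsilon E_1(t)u_1\|_{X(T)}\lesssim \varepsilon\,\|(u_0,u_1)\|_{\mathcal{D}_{\sigma}^m}$.

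Next comes the nonlinear estimate, which is the crux. Applying the $E_1$ bound to the Duhamel term and then the fractional Gagliardo–Nirenberg inequality,
$$ \Big\|\int_0^t E_1(t-\tau)|u(\tau)|^p\,\rmd\tau\Big\|_{H^{\sigma}}\lesssim\int_0^t\big\||u(\tau)|^p\big\|_{L^2}\,\rmd\tau=\int_0^t\|u(\tau)\|_{L^{2p}}^p\,\rmd\tau\lesssim T\,\|u\|_{X(T)}^{p}. $$
Here $\|u(\tau)\|_{L^{2p}}\lesssim\|u(\tau)\|_{H^{\sigma}}$ is exactly where the hypotheses on $p$ enter: the embedding $H^{\sigma}\hookrightarrow L^{2p}$ needs $2\le 2p$ (automatic since $p>2/m\ge1$) together with $2p\le 2n/(n-2\sigma)$, i.e. $p\le n/(n-2\sigma)$, when $2\sigma<n$; for $n\le2\sigma$ every finite $p$ is admissible, matching \eqref{condition_Local_1.2}. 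The lower bound $p>2/m$ guarantees the window for $p$ is nonempty under \eqref{condition_Local_1.1.1}, that is $2/m<n/(n-2\sigma)$, which is precisely $n<4\sigma/(2-m)$. For the contraction I would use $\big||u|^p-|v|^p\big|\lesssim\big(|u|^{p-1}+|v|^{p-1}\big)|u-v|$ with Hölder along the splitting $\tfrac12=\tfrac{p-1}{2p}+\tfrac1{2p}$ and the same Gagliardo–Nirenberg step, giving
$$ \|N[u]-N[v]\|_{X(T)}\lesssim T\big(\|u\|_{X(T)}^{p-1}+\|v\|_{X(T)}^{p-1}\big)\|u-v\|_{X(T)}. $$

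Finally I would close the argument: fixing $R:=2C\varepsilon\|(u_0,u_1)\|_{\mathcal{D}_{\sigma}^m}$ from the linear bound, the self-mapping estimate reads $\|N[u]\|_{X(T)}\le\tfrac R2+CTR^p$ and the contraction factor is $CTR^{p-1}$, so choosing $T>0$ small enough (then any $\varepsilon\in(0,\varepsilon_0]$) makes $CTR^{p-1}\le\tfrac12$; hence $N$ maps $B_R(T)$ into itself and contracts there, and the Banach fixed point theorem produces a unique $u\in B_R(T)\subset\mathcal{C}([0,T),H^{\sigma})$. The main obstacle I anticipate is not the functional-analytic scheme but the verification of the time-uniform $L^2\to H^{\sigma}$ smoothing of $E_1$: one must control the roots across the low-frequency regime (two real roots $\sim-|\xi|^{2\sigma-2\sigma_1}$ and $\sim-|\xi|^{2\sigma_1}$), the high-frequency regime ($\sim-|\xi|^{2\sigma-2\sigma_2}$ and $\sim-|\xi|^{2\sigma_2}$), and the intermediate transition zone, and confirm that $\sigma_2>\sigma/2$ is exactly what compensates the loss of $\sigma$ derivatives incurred by placing $|u|^p$ in $L^2$.
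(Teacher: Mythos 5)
Your proposal is correct, and the skeleton (Duhamel reformulation plus Banach fixed point in $\mathcal{C}([0,T],H^{\sigma})$) is the same as the paper's; the execution, however, is genuinely more elementary. The paper runs the local proof through the same weighted machinery it needs for the global result: the time-weighted norms of $X(T)$, the auxiliary space $Y(T)$, the $(L^m\cap L^2)$--$L^2$ decay estimates of Lemma \ref{Proposition3.1}, and a contraction carried out in the weaker metric $Z(T)=\mathcal{C}([0,T),L^2)$ via a pair of exponents $q,\gamma$ and the embedding \eqref{embedding}. You instead use plain sup-in-time norms, verify the time-uniform bounds $\|E_0(t)u_0\|_{H^\sigma}\lesssim\|u_0\|_{H^\sigma}$ and $\|E_1(t)g\|_{H^\sigma}\lesssim\|g\|_{L^2}$ directly from the symbol asymptotics \eqref{Re1}--\eqref{Re2} (your observation that $\sigma_2>\sigma/2$ is exactly what makes $|\xi|^{\sigma-2\sigma_2}$ bounded at high frequencies, and that $|\widehat{K_1}(t,\xi)|\le t$ elsewhere, is the right justification), and then close with the Sobolev embedding $H^\sigma\hookrightarrow L^{2p}$ — which is the same condition as the paper's Gagliardo--Nirenberg exponent $\theta_2\in[0,1]$ in Proposition \ref{lemma3.2} and reproduces \eqref{condition_Local_1.2} exactly. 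Your contraction in the full $X(T)$ metric also works, because the $L^2\to H^\sigma$ smoothing of $E_1$ means you only ever need $\||u|^p-|v|^p\|_{L^2}$, so the paper's detour through $Z(T)$ and $L^\gamma\cap H^{-\sigma}$ is avoidable here. What each approach buys: yours is shorter, self-contained, and incidentally shows that neither the $L^m$ regularity of the data nor the lower bound $p>2/m$ is actually needed for the local statement; the paper's heavier setup pays off only later, since the same spaces, constants $M$, $C_1$, and mapping $\mathcal{N}$ are reused verbatim in the global-existence and lifespan arguments.
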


\begin{theorem}[\textbf{Global existence}]\label{Global-Existance}
    Let $n \geq 1$ and $0 \leq \sigma_1 < \sigma/2 < \sigma_2 \leq \sigma$. The parameter $m$ satisfies
    \begin{equation}\label{condition 1.3.1}
      \max\left\{1,\frac{\sqrt{(n+4\sigma_1)^2+16n(\sigma-\sigma_1)}-n-4\sigma_1}{4(\sigma-\sigma_1)}\right\} < m
      \begin{cases}
      \vspace{0.2cm}
      \leq 2 &\text{ if } \sigma_1 = 0,\\
      <\displaystyle \frac{2n}{n+4\sigma_1} &\text{ if } \sigma_1 > 0.
      \end{cases}
    \end{equation}
    Moreover, we assume the following conditions:
    \begin{align}\label{condition_global_1.2.1}
    1+ \displaystyle\frac{2m\sigma}{n-2m\sigma_1} \leq p
        \begin{cases}
        \vspace{0.2cm}
            < \infty &\text{ if } n \leq 2\sigma\\
             \leq  \displaystyle\frac{n}{n-2\sigma} &\text{ if }  2\sigma < n < \displaystyle\frac{4\sigma}{2-m}
        \end{cases}
    \end{align}
    and the initial data $(u_0, u_1) \in \mathcal{D}_{\sigma}^m$. Then, there exists a constant $\varepsilon_0 > 0$ such that for any $\varepsilon \in (0,\varepsilon_0]$, the Cauchy problem \eqref{Main.Eq.1} admits a unique global solution belonging to the class
\begin{equation*}
    u \in \mathcal{C}\left([0, \infty\right), H^{\sigma}).
\end{equation*}
Furthermore, the following estimates hold:
\begin{align*}
    \|u(t,\cdot)\|_{L^2} &\lesssim (1+t)^{-\frac{n}{2(\sigma-\sigma_1)}(\frac{1}{m}-\frac{1}{2})+\frac{\sigma_1}{\sigma-\sigma_1}} \|(u_0,u_1)\|_{\mathcal{D}_{\sigma}^m},\\
    \||D|^{\sigma}u(t,\cdot)\|_{L^2} &\lesssim (1+t)^{-\frac{n}{2(\sigma-\sigma_1)}(\frac{1}{m}-\frac{1}{2})-\frac{\sigma-2\sigma_1}{2(\sigma-\sigma_1)}} \|(u_0,u_1)\|_{\mathcal{D}_{\sigma}^m}.
\end{align*}
\end{theorem}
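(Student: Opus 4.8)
The plan is to prove global existence via a standard contraction-mapping (Banach fixed-point) argument on Duhamel's integral formulation, using the linear $L^2$–$L^2$ and $L^m$–$L^2$ decay estimates that were obtained in \cite{DaoDuongNguyen2024} for the homogeneous problem \eqref{Main.Eq.2}. Writing $K_0(t)$ and $K_1(t)$ for the propagators associated with the two initial data slots, the mild solution solves
\begin{equation*}
u(t,x) = \varepsilon K_0(t)u_0 + \varepsilon K_1(t)u_1 + \int_0^t K_1(t-s)\,|u(s,\cdot)|^p\,\rmd s.
\end{equation*}
I would introduce the solution space
\begin{equation*}
X(T) := \mathcal{C}\big([0,T), H^{\sigma}\big), \qquad \|u\|_{X(T)} := \sup_{0\le t<T}\Big( (1+t)^{a_0}\|u(t,\cdot)\|_{L^2} + (1+t)^{a_\sigma}\||D|^{\sigma}u(t,\cdot)\|_{L^2}\Big),
\end{equation*}
where the weights $a_0 = \tfrac{n}{2(\sigma-\sigma_1)}(\tfrac1m-\tfrac12)-\tfrac{\sigma_1}{\sigma-\sigma_1}$ and $a_\sigma = \tfrac{n}{2(\sigma-\sigma_1)}(\tfrac1m-\tfrac12)+\tfrac{\sigma-2\sigma_1}{2(\sigma-\sigma_1)}$ are exactly the decay rates asserted in the statement. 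The goal is to show the nonlinear operator $N u := \varepsilon(\text{linear part}) + \text{Duhamel}$ maps $X(T)$ into itself and is a contraction, uniformly in $T$, for $\varepsilon$ small.

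\textbf{First I would bound the linear part.} By the linear estimates from \cite{DaoDuongNguyen2024} applied to data in $\mathcal{D}_{\sigma}^m$, the two terms $\varepsilon K_0(t)u_0 + \varepsilon K_1(t)u_1$ satisfy exactly the weighted bounds encoded in $\|\cdot\|_{X(T)}$, giving $\|\varepsilon(\text{linear part})\|_{X(T)} \lesssim \varepsilon\,\|(u_0,u_1)\|_{\mathcal{D}_{\sigma}^m}$. \textbf{Next I would estimate the Duhamel term}, which is the heart of the argument. The key observation, flagged in the introduction, is that the nonlinearity enjoys extra integrability: from $u(s)\in L^2\cap\dot H^\sigma$ and Gagliardo–Nirenberg one controls $\|u(s)\|_{L^{pm_1}}$, so that $|u(s)|^p \in L^{m_1}\cap L^2$ with $m_1=\max\{1,2/p\}$. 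One then splits the time integral at $s=t/2$: on $[0,t/2]$ apply the $L^{m_1}$–$L^2$ (and $L^{m_1}$–$\dot H^\sigma$) linear estimates to $K_1(t-s)$, and on $[t/2,t]$ apply the $L^2$–$L^2$ estimates, in each case inserting the induction bound $\||u(s)|^p\|_{L^{m_1}}\lesssim \|u\|_{X(s)}^p (1+s)^{-p a_0 + \text{GN correction}}$. The condition $p\ge 1+\tfrac{2m\sigma}{n-2m\sigma_1}$ is precisely what makes the resulting time integrals converge (or grow no faster than the target weight), yielding $\|\text{Duhamel}(u)\|_{X(T)}\lesssim \|u\|_{X(T)}^p$ uniformly in $T$.

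\textbf{The main obstacle is the critical case} $p = p_{\rm crit}(m,\sigma,\sigma_1)$, where the naive integrals on $[t/2,t]$ produce a borderline logarithmic or non-integrable tail. This is exactly where the sharper gain $|u|^p\in L^{m_1}$ with $m_1<m$ must be exploited rather than merely $|u|^p\in L^{m}$: the improved integrability shifts the decay rate in the $L^{m_1}$–$L^2$ estimate enough to recover convergence, and this is what confines the critical exponent to the global-existence range (for $m>1$, in the admissible low-dimensional window fixed by \eqref{condition 1.3.1} and \eqref{condition_global_1.2.1}). I would need to check carefully that the lower bound on $m$ in \eqref{condition 1.3.1}—which is the larger root of the quadratic $2(\sigma-\sigma_1)m^2+(n+4\sigma_1)m-n=0$—guarantees $p_{\rm crit}\le pm_1/2 \cdot(\text{admissible GN range})$, i.e. that Gagliardo–Nirenberg is applicable with the required exponents and that $pa_0$ strictly exceeds $1$ after the $m_1$-correction.

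\textbf{Finally I would close the argument.} The contraction estimate $\|Nu-Nv\|_{X(T)}\lesssim (\|u\|_{X(T)}^{p-1}+\|v\|_{X(T)}^{p-1})\|u-v\|_{X(T)}$ follows from the elementary inequality $\big||u|^p-|v|^p\big|\lesssim |u-v|\,(|u|^{p-1}+|v|^{p-1})$ together with Hölder and the same Gagliardo–Nirenberg bounds; combined with the self-mapping estimate and $\varepsilon$ sufficiently small, Banach's fixed-point theorem on the closed ball of radius $\sim\varepsilon\|(u_0,u_1)\|_{\mathcal{D}_{\sigma}^m}$ in $X(T)$ yields a unique solution with $T=\infty$. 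The asserted decay estimates for $\|u(t,\cdot)\|_{L^2}$ and $\||D|^\sigma u(t,\cdot)\|_{L^2}$ are then read off directly from membership in $X(\infty)$ and the definition of its norm.
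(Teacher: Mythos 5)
Your proposal follows essentially the same route as the paper: the same weighted space $X(T)=\mathcal{C}([0,T),H^{\sigma})$ with exactly those decay weights, the same Duhamel splitting at $\tau=t/2$ with $L^{m_1}\cap L^2\to L^2$ estimates on $[0,t/2]$ and $L^2\to L^2$ estimates on $[t/2,t]$, the same Gagliardo--Nirenberg control of $|u|^p$ in $L^{m_1}\cap L^2$ with $m_1=\max\{1,2/p\}$, and the same observation that the gain $m_1<m$ (enforced by the quadratic lower bound on $m$, which is equivalent to $p_{\mathrm{crit}}>2/m$) is what absorbs the borderline critical case. The one point where you deviate is the contraction estimate: you claim it in the full $X(T)$ norm, whereas the paper proves contraction only in the weaker metric $Z(T)=\mathcal{C}([0,T),L^2)$ with weight $(1+t)^{-\sigma_1/(\sigma-\sigma_1)}$, because its H\"older splitting $\||u|^p-|v|^p\|_{L^{\gamma}}\lesssim\|u-v\|_{L^2}\big(\|u\|_{L^{q(p-1)}}^{p-1}+\|v\|_{L^{q(p-1)}}^{p-1}\big)$ measures the difference only in $L^2$ and does not reproduce the full decay weight (the resulting time integral acquires an uncompensated factor $(1+t)^{\frac{n}{2(\sigma-\sigma_1)}(\frac1m-\frac1\gamma)}$, which grows since $\gamma$ is close to $2$); to close in the full norm you would instead have to split as $\||u|^p-|v|^p\|_{L^{\beta}}\lesssim\|u-v\|_{L^{p\beta}}\big(\|u\|_{L^{p\beta}}^{p-1}+\|v\|_{L^{p\beta}}^{p-1}\big)$ and interpolate the difference itself via Gagliardo--Nirenberg, or simply adopt the paper's two-norm Banach fixed-point argument.
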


\begin{remark}
    \fontshape{n}
\selectfont
We will explain the appearance of these conditions appearing in Theorems \ref{Local-existence}-\ref{Global-Existance}. The second term on the left-hand side of the condition (\ref{condition 1.3.1}) follows from the following inequality:
\begin{align*}
    1 + \frac{2m\sigma}{n-2m\sigma_1} > \frac{2}{m},
\end{align*}
which is to guarantee the condition \eqref{condition_Local_1.2} for the local solution existence. Furthermore, we can simplify the following condition in some low-dimensional spaces, namely,
\begin{align*}
     \max\left\{1,\frac{\sqrt{(n+4\sigma_1)^2+16n(\sigma-\sigma_1)}-n-4\sigma_1}{4(\sigma-\sigma_1)}\right\} = 1
\end{align*}
if and only if $n \leq 2(\sigma +\sigma_1)$. On the other hand, the right-hand sides of the conditions (\ref{condition_Local_1.1.1}) and (\ref{condition 1.3.1}) arise from the application of Lemma \ref{Proposition3.1}. More be specific, it is derived from the condition $n > 2m_0\sigma_1$
  appearing in Theorem 2.1 of \cite{DaoDuongNguyen2024}. Therefore, in the case $\sigma_1 = 0$, this condition disappears, leading to the validity of  $m = 2$. Finally, the appearance of the right-hand side in the conditions (\ref{condition_Local_1.2}) and (\ref{condition_global_1.2.1}) comes from the use of Proposition \ref{lemma3.2}.
\end{remark}

\begin{remark}
 \fontshape{n}
\selectfont
    From the statements of Theorems \ref{Local-existence} and \ref{Global-Existance}, one sees that the damping term $(-\Delta)^{\sigma_2} u_t$, that is, ``$\sigma$-evolution like damping", does not have a remarkable impact on the existence of either local or global (in time) solutions to the problem (\ref{Main.Eq.1}). The main reason is the choice of the solution space $X(T)$ and the weights in the definition of its norm, which are discussed in the proof of Theorems \ref{Local-existence} and \ref{Global-Existance} in Section \ref{section3}.
\end{remark}
\medskip

\begin{theorem}[\textbf{Blow-up}]\label{Blow-up results}
    Let $n \geq 1$, $m\ge 1$ and $ 0 \leq \sigma_1 < \sigma/2 < \sigma_2 \leq \sigma$. We assume that the data $u_0 \equiv 0$ and $u_1 \in L^m$ satisfying the following relation:
    \begin{equation}\label{condition1.1.1}
        \displaystyle\int_{\mathbb{R}^n} u_1(x)dx> 0 \,\,\text{ if } m =1 \quad \text{ or }\quad u_1(x) \geq \langle x \rangle^{-\frac{n}{m}} (\log(e+|x|))^{-1} \,\,\text{ if } m > 1.
    \end{equation}
     Moreover, we also assume that the exponent $p$ and the spatial dimension $n$ fulfill
    \begin{align}
    1 < p
    \begin{cases}\label{condition1.1.2}
        \vspace{0.2cm}
        <\ity &\text{ if } m\ge 1 \text{ and } n \leq 2m\sigma_1, \\
    \vspace{0.2cm}
        \leq 1 + \displaystyle\frac{2\sigma}{n-2\sigma_1} &\text{ if } m = 1 \text{ and } n> 2\sigma_1, \\
        \vspace{0.2cm}
        < 1+\displaystyle\frac{2m\sigma}{n-2m\sigma_1} &\text{ if } m > 1 \text{ and } n> 2m\sigma_1. 
    \end{cases}
    \end{align}
Then, there is no global (in time) weak solution $u \in \mathcal{C}([0,\infty), L^2)$ to \eqref{Main.Eq.1}.
\end{theorem}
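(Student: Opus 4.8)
The plan is to argue by contradiction using a rescaled test function method, adapted both to the nonlocality of the fractional operators and to the parabolic scaling dictated by the effective diffusion of the linear part. Suppose a global weak solution $u\in\mathcal{C}([0,\infty),L^2)$ exists. Writing $\mathcal{A}\psi:=\psi_{tt}+(-\Delta)^{\sigma}\psi-(-\Delta)^{\sigma_1}\psi_t-(-\Delta)^{\sigma_2}\psi_t$ for the formal adjoint operator and using $u_0\equiv 0$, one integration by parts twice in $t$ on the second–order term and once in $t$ on each damping term (the latter flipping the sign and producing the operator $-(-\Delta)^{\sigma_i}\psi_t$) yields, for every admissible nonnegative test function $\psi$, the identity
\begin{equation*}
\begin{split}
&\varepsilon\int_{\R^n}u_1(x)\,\psi(0,x)\,dx + \int_0^\infty\!\!\int_{\R^n}|u|^p\,\psi\,dx\,dt \\
&\qquad = \int_0^\infty\!\!\int_{\R^n} u\,\mathcal{A}\psi\,dx\,dt.
\end{split}
\end{equation*}
The term carrying $u_1$ is the positive source that hypothesis \eqref{condition1.1.1} is designed to keep strictly positive.

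Next I would take $\psi(t,x)=\eta\big(t/R^{2(\sigma-\sigma_1)}\big)^{\ell}\,\phi(x/R)$, where $\eta$ is a smooth time cut-off equal to $1$ on $[0,1/2]$ and vanishing on $[1,\infty)$, the exponent $\ell$ is large enough that $\eta^{-p'/p}\,|\partial_t(\eta^\ell)|^{p'}$ remains bounded, and $\phi\ge 0$ is a fixed smooth profile chosen so that the fractional Laplacians $(-\Delta)^{\sigma}\phi$ and $(-\Delta)^{\sigma_i}\phi$ obey quantitative polynomial tail bounds of order $\langle\cdot\rangle^{-n-2\sigma}$. The decisive structural facts are the exact homogeneity $(-\Delta)^{\theta}[\phi(\cdot/R)]=R^{-2\theta}\big[(-\Delta)^{\theta}\phi\big](\cdot/R)$ and the choice of the parabolic time scale $T=R^{2(\sigma-\sigma_1)}$, which is precisely the scaling of the effective diffusion operator $(-\Delta)^{\sigma-\sigma_1}$. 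With this scaling $(-\Delta)^{\sigma}\psi$ and $(-\Delta)^{\sigma_1}\psi_t$ are both of order $R^{-2\sigma}$, whereas $\psi_{tt}\sim R^{-4(\sigma-\sigma_1)}$ and $(-\Delta)^{\sigma_2}\psi_t\sim R^{-2(\sigma_2+\sigma-\sigma_1)}$ are of strictly lower order because $\sigma_1<\sigma/2<\sigma_2$; a short optimization over admissible time scales confirms that $T=R^{2(\sigma-\sigma_1)}$ is the unique choice producing the sharp exponent, and it explains why the $\sigma_2$-damping plays no role.

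I would then bound the right-hand side by Hölder's inequality as $\int\!\int|u|\,|\mathcal{A}\psi|\le I_R^{1/p}J_R^{1/p'}$, with $I_R:=\int\!\int|u|^p\psi$ and $J_R:=\int\!\int\psi^{-p'/p}|\mathcal{A}\psi|^{p'}$, and absorb $I_R$ by Young's inequality to obtain $\varepsilon\int_{\R^n}u_1\,\psi(0,\cdot)\lesssim J_R$. The rescaling $x=R\tilde x$, $t=R^{2(\sigma-\sigma_1)}\tilde t$ gives $J_R\sim R^{\,n+2(\sigma-\sigma_1)-2\sigma p'}$. Comparing with the source term—$\int u_1\psi(0,\cdot)\to\int u_1>0$ when $m=1$, and $\int u_1\psi(0,\cdot)\gtrsim R^{\,n(m-1)/m}(\log R)^{-1}$ when $m>1$ by \eqref{condition1.1.1}—yields a contradiction as $R\to\infty$ exactly in the ranges of \eqref{condition1.1.2}: namely $p<1+2\sigma/(n-2\sigma_1)$ for $m=1$ and $p<1+2m\sigma/(n-2m\sigma_1)$ for $m>1$ (the extra logarithmic factor being exactly what excludes the critical value when $m>1$), while for $n\le 2m\sigma_1$ the exponent comparison holds for every $p>1$.

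Finally, the critical value $p=1+2\sigma/(n-2\sigma_1)$ in the case $m=1$ needs a refinement, since there $J_R\sim R^{0}$ is merely bounded. Here I would first deduce from $I_R\le I_R^{1/p}J_R^{1/p'}$ the uniform bound $I_R\le J_R\le C$, hence $\int_0^\infty\!\int_{\R^n}|u|^p\,dx\,dt<\infty$; then, replacing $I_R$ in the Hölder step by the integral of $|u|^p\psi$ over the shrinking region where $\mathcal{A}\psi$ concentrates and invoking dominated convergence, that contribution tends to $0$, forcing $\varepsilon\int u_1\le 0$ and contradicting $\int u_1>0$. The main obstacle throughout is the nonlocality of $(-\Delta)^{\sigma}$ and $(-\Delta)^{\sigma_i}$: unlike the classical Laplacian they do not preserve compact support, so both the estimate of $J_R$ and the localization in the critical case must be carried out through careful polynomial tail estimates on $(-\Delta)^{\theta}\phi$, which is the technically delicate heart of the argument.
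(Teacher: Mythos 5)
Your overall strategy coincides with the paper's: the weak formulation with $u_0\equiv 0$, a polynomially decaying spatial profile (so that the fractional Laplacians can be controlled by pointwise tail estimates rather than support arguments), the parabolic time scale $T=R^{2(\sigma-\sigma_1)}$ dictated by the effective diffusion, the H\"older/Young absorption yielding $\varepsilon\int_{\R^n}u_1\,\psi(0,\cdot)\,dx\lesssim R^{\,n+2(\sigma-\sigma_1)-2\sigma p'}$, and the logarithmically weighted lower bound $\int u_1\psi(0,\cdot)\gtrsim R^{n-n/m}(\log R)^{-1}$ for $m>1$. The subcritical case for $m=1$, the case $n\le 2m\sigma_1$, and the whole case $m>1$ are handled correctly and essentially as in the paper.

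There is, however, a genuine gap in the critical case $p=1+2\sigma/(n-2\sigma_1)$ with $m=1$. You propose to localize the H\"older step to ``the shrinking region where $\mathcal{A}\psi$ concentrates'' and invoke dominated convergence using $\int_0^\infty\!\int_{\R^n}|u|^p\,dx\,dt<\infty$. This works for the three terms carrying a time derivative ($\psi_{tt}$, $(-\Delta)^{\sigma_1}\psi_t$, $(-\Delta)^{\sigma_2}\psi_t$), since they are supported in $t\in[R^{\alpha}/2,R^{\alpha}]$ and the corresponding factor $\tilde{I}_R^{1/p}$ tends to $0$. It fails for the term $(-\Delta)^{\sigma}\psi=\eta_R(t)\,(-\Delta)^{\sigma}\varphi_R(x)$: it carries no time derivative, so it is supported on all of $[0,R^{\alpha}]$ in time, and because $(-\Delta)^{\sigma}$ is nonlocal and the profile decays only polynomially, there is no spatial annulus escaping to infinity either. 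At the critical exponent its contribution is of size $I_R^{1/p}\cdot R^{0}$, and $I_R$ converges to $\|u\|_{L^p([0,\infty)\times\R^n)}^p=:L$, which need not be small; the resulting inequality $L+\varepsilon\int u_1\le C\,L^{1/p}$ is not a contradiction. The paper closes this by inserting a second, independent dilation parameter $K$ into the spatial test function, $\varphi_R(x)=\varphi(R^{-1}K^{-1}x)$, so that the $(-\Delta)^{\sigma}$ term acquires the extra factor $K^{-2\sigma+n/p'}$ with $n-2\sigma p'=-2(\sigma-\sigma_1)<0$; one first sends $R\to\infty$ (killing the time-derivative terms and making $\int u_1\varphi_R\to\int u_1$) and then $K\to\infty$ to force $\varepsilon\int u_1\le 0$. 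Without this, or an equivalent device, your argument does not reach the critical exponent for $m=1$, which is one of the main assertions of the theorem.
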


\begin{remark}
\fontshape{n}
\selectfont
Based on Theorems \ref{Global-Existance} and \ref{Blow-up results}, and Theorem 3.1 in \cite{DaoDuongNguyen2024}, which addresses global existence for the case $m=1$, we can observe the influence of the 
$L^m$ regularity of the initial data $(u_0, u_1)$ and the exponent $p$ on the solution properties of the problem (\ref{Main.Eq.1}) as follows:
\begin{itemize}[leftmargin=*]
    \item If $m = 1$, the problem (\ref{Main.Eq.1}) has a unique global (in time) solution with small data when
    $$p > 1+ \frac{2\sigma}{n-2\sigma_1} $$
    and admits a blow-up solution when
    $$p \leq 1+ \frac{2\sigma}{n-2\sigma_1}.$$
    This means that the critical exponent $p_{\rm crit}(1,\sigma,\sigma_1)= 1+ (2\sigma)/(n-2\sigma_1)$ belongs to the blow-up range.
    \item If $m$ satisfies the condition (\ref{condition 1.3.1}), the problem (\ref{Main.Eq.1}) has a unique global (in time) solution with small data when
    $$ p \geq 1+ \frac{2m\sigma}{n-2m\sigma_1} $$
    and admits a blow-up solution when
    $$ p < 1+ \frac{2m\sigma}{n-2m\sigma_1}. $$
    This means that the critical exponent $p_{\rm crit}(m,\sigma,\sigma_1)= 1+ (2m\sigma)/(n-2m\sigma_1)$ belongs to the global existence range.
\end{itemize}
\end{remark}

\begin{remark}
\fontshape{n}
\selectfont
    In this remark, let us consider the special case of $\sigma_1 = 0$ corresponding to frictional damping and $\sigma_2 =\sigma$ corresponding to visco-elastic damping, which has been studied in \cite{DaoMichihisa2020, IkehataTakeda2017}. In this context, the authors identified that the critical exponent for this problem is
\begin{align*}
    p_{\rm crit}(m,\sigma):= 1+ \frac{2m\sigma}{n}.
\end{align*}
More in detail, they established the existence of a global (in time) solution with small initial data when $p > p_{\rm crit}(m,\sigma)$ and a blow-up solution when $p < p_{\rm crit}(m,\sigma)$. However, the critical case $p = p_{\rm crit}(m,\sigma)$ was not addressed in these cited papers. For this reason, Theorem \ref{Global-Existance} fills this gap from the view of critical exponents by demonstrating the existence of a global solution when $p=p_c(m)$, under the following condition:
$$
   m \in (1,2] \text{ if } n \leq 2\sigma \,\text{ or } m \in \left(\frac{\sqrt{n^2 + 16n \sigma} -n}{4\sigma}, 2\right] \text{ if } n > 2\sigma.
$$
\end{remark}

\textbf{This paper is organized as follows:} In Section \ref{section3}, we provide the proofs of the local and global (in time) existence results for solutions to the problem (\ref{Main.Eq.1}). Subsequently, in Section \ref{Proof of blow-up results} we establish the blow-up result and derive lifespan estimates for solutions in the subcritical case as well.

\section{Proofs of local and global (in time) existence}\label{section3}
\subsection{Philosophy of our approach}
In this section, let us focus on proving Theorems \ref{Local-existence} and \ref{Global-Existance}. First, we can write the solution to $(\ref{Main.Eq.2})$ by the formula
\begin{equation*}
u(t,x) = \varepsilon K_0(t,x) \ast_{ x} u_0(x) + \varepsilon K_1(t,x) \ast_{ x} u_1(x),
\end{equation*}
so that the solution to $(\ref{Main.Eq.1})$ can be expressed by
\begin{equation*}\label{2.31}
    u(t,x) = \varepsilon K_0(t,x) \ast_x u_0(x) + \varepsilon K_1(t,x) \ast_x u_1(x) + \int_0^t K_1(t-\tau,x) \ast_x |u(\tau,x)|^p d\tau,
\end{equation*}
thanks to Duhamel's principle.
Here 
\begin{equation*}
\widehat{K_0}(t,\xi) = \frak{F}_{x \to \xi}(K_0(t,x)) = \frac{\lambda_{1} e^{\lambda_{2}t}-\lambda_{2}e^{\lambda_{1}t}}{\lambda_{1} - \lambda_{2}}
\end{equation*}
and
\begin{equation*}
\widehat{K_1}(t,\xi) = \frak{F}_{x\to\xi}(K_1(t,x))=  \frac{e^{\lambda_{1}t}-e^{\lambda_{2}t}}{\lambda_{1} - \lambda_{2}}.
\end{equation*}
The characteristic roots $\lambda_{1,2} =\lambda_{1,2}(|\xi|)$ are given by
\begin{equation*}
\lambda_{1,2}(|\xi|) =
\begin{cases}
\vspace{0.2cm}
    \f{1}{2}\left(-|\xi|^{2\sigma_1} - |\xi|^{2\sigma_2} \pm \sqrt{(|\xi|^{2\sigma_1} + |\xi|^{2\sigma_2})^{2} - 4 |\xi|^{2\sigma}}\right) & \text { if } |\xi| \in \mathbb{R}_+ \setminus \Omega, \\
     \f{1}{2}\left(-|\xi|^{2\sigma_1} - |\xi|^{2\sigma_2} \pm i \sqrt{4 |\xi|^{2\sigma}-(|\xi|^{2\sigma_1} + |\xi|^{2\sigma_2})^{2} }\right) & \text { if } |\xi| \in \Omega,  
\end{cases}
\end{equation*}
where $\Omega = \left\{r \in \mathbb{R}_+:  r^{2\sigma_1} +  r^{2\sigma_2} < 2 r^{\sigma} \right\}$. Because of $0 \leq \sigma_1 <\sigma/2 < \sigma_2 \leq \sigma$, there exists a sufficiently small constant $\varepsilon^*>0$ such that
\begin{equation*}
    (0, \varepsilon^*)\cup \left(\frac{1}{\varepsilon^*},\ity\right) \subset \Omega.
\end{equation*}
Then, taking account of the cases of small and large frequencies separately we conclude that
\begin{align}
     &\lambda_{1} \sim -|\xi|^{2(\sigma-\sigma_1)},\quad 
    \lambda_{2} \sim -|\xi|^{2\sigma_1}, \quad \lambda_1-\lambda_2 \sim |\xi|^{2\sigma_1}\quad \text{ for } |\xi| \leq \varepsilon^*, \label{Re1}\\
    &\lambda_{1} \sim -|\xi|^{2(\sigma-\sigma_2)} ,\quad \lambda_{2} \sim -|\xi|^{2\sigma_2}, \quad \lambda_1-\lambda_2 \sim |\xi|^{2\sigma_2} \quad \text{ for }|\xi| \geq \frac{1}{\varepsilon^*}. \label{Re2}
\end{align} 
Let $\chi_k= \chi_k(r)$ with $k\in\{\rm L,H\}$ be smooth cut-off functions having the following properties:
\begin{align*}
&\chi_{\rm L}(r)=
\begin{cases}
1 &\quad \text{ if }r\le \varepsilon^*/2 \\
0 &\quad \text{ if }r\ge \varepsilon^*
\end{cases}
\quad \text{ and } \qquad
\chi_{\rm H}(r)= 1 -\chi_{\rm L}(r).
\end{align*}
It is obvious to see that $\chi_{\rm H}(r)= 1$ if $r \geq \varepsilon^*$ and $\chi_{\rm H}(r)= 0$ if $r \le \varepsilon^*/2$. Next, let $ T\in (0,\infty]$ and $m \in [1,2]$. We define the following function space:
\begin{align*}
    X(T) :=  \mathcal{C}\left([0,T), H^{\sigma}\right)
\end{align*}
with the norm
\begin{align*}
    &\|\varphi\|_{X(T)} := \sup _{t \in [0,T)} \bigg\{(1+t)^{\frac{n}{2(\sigma-\sigma_1)}(\frac{1}{m}-\frac{1}{2})-\frac{\sigma_1}{\sigma-\sigma_1}}\|\varphi(t,\cdot)\|_{L^2}\\
    &\hspace{4cm}+ (1+t)^{\frac{n}{2(\sigma-\sigma_1)}(\frac{1}{m}-\frac{1}{2})+\frac{\sigma-2\sigma_1}{2(\sigma-\sigma_1)}} \||D|^{\sigma} \varphi(t,\cdot)\|_{L^2}  \bigg\}.
\end{align*}
Let $M > 0$, we consider the closed ball 
\begin{align*}
    X(T,M) := \left\{u \in X(T)\,:\, \|u\|_{X(T)} \leq M \right\}
\end{align*}
and the function space
\begin{align*}
    Z(T) := \mathcal{C}([0,T),L^2)
\end{align*}
with the norm
\begin{align*}
    \|\varphi\|_{Z(T)} := \sup _{t \in [0,T)} \left\{(1+t)^{-\frac{\sigma_1}{\sigma-\sigma_1}}\|\varphi(t,\cdot)\|_{L^2}\right\}.
\end{align*}
Furthermore, we also define the function space
\begin{align*}
    Y(T) := \mathcal{C}\left([0,T), L^2 \cap L^{m_1}\right),
\end{align*}
with the norm
\begin{align*}
    \|\varphi\|_{Y(T)} &:= \sup _{t \in [0,T)}\bigg\{(1+t)^{\frac{np}{2(\sigma-\sigma_1)}(\frac{1}{m}-\frac{1}{2p}) - \frac{p\sigma_1}{\sigma-\sigma_1} }\| \varphi(t,\cdot)\|_{L^2} \\
    &\hspace{4cm}+  \sup _{\beta \in [m_1, 2]}(1+t)^{\frac{n}{2(\sigma-\sigma_1)}(\frac{p}{m}-\frac{1}{\beta})-\frac{p\sigma_1}{\sigma-\sigma_1}} \|\varphi(t,\cdot)\|_{L^{\beta}}\bigg\},
\end{align*}
where we choose $m_1 := \max\left\{1, 2/p\right\}.$ \medskip

Following the same approach as the proof of Theorem 2.1 in \cite{DaoDuongNguyen2024} with some minor modifications we may conclude the auxiliary estimates as follows.

\begin{lemma}\label{Proposition3.1}
    Let $n \geq 1$, $s \geq 0$ and $0 \leq \sigma_1 < \sigma/2 < \sigma_2 \leq \sigma $. The parameter $m$ satisfies the condition (\ref{condition_Local_1.1.1}) or $m=1$. The following $(L^m \cap L^2) -L^2$ estimates hold for $ j \in \{0,1\}$:
    \begin{align*}
        \left\|\partial_t^j|D|^s K_1(t,x)*_{x}\varphi(x)\right\|_{L^2} &\lesssim (1+t)^{-\frac{n}{2(\sigma-\sigma_1)}(\frac{1}{m}-\frac{1}{2})+\frac{\sigma_1}{\sigma-\sigma_1}-\frac{s}{2(\sigma-\sigma_1)}-j} \|\varphi\|_{L^m \cap H^{s+2(j-1)\sigma_2}}
    \end{align*}
    and the $L^2-L^2$ estimates
    \begin{align*}
       \left\||D|^s K_1(t,x)*_{x}\varphi(x)\right\|_{L^2} &\lesssim
       \begin{cases}
           (1+t)^{-\frac{s}{2(\sigma-\sigma_1)}+1} \|\varphi\|_{L^2} &\text{ if } s < 2\sigma_1,\\
           (1+t)^{-\frac{s}{2(\sigma-\sigma_1)}+\frac{\sigma_1}{\sigma-\sigma_1}} \|\varphi\|_{H^{s-2\sigma_2}} &\text{ if } s \geq 2\sigma_1,
       \end{cases}\\
       \left\|\partial_t|D|^s K_1(t,x) \ast_x \varphi(x) \right\|_{L^2} &\lesssim (1+t)^{-\frac{s}{2(\sigma-\sigma_1)}} \|\varphi\|_{H^s}.
    \end{align*}
\end{lemma}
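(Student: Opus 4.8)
The plan is to argue entirely on the Fourier side. By Plancherel's theorem every quantity on the left is of the form $\big\| \partial_t^j |\xi|^s \widehat{K_1}(t,\xi)\,\widehat{\varphi}(\xi) \big\|_{L^2_\xi}$, so the whole lemma reduces to pointwise control of the multiplier $\partial_t^j |\xi|^s \widehat{K_1}(t,\xi)$. First I would split $\R^n$ into the low-frequency region through $\chi_{\rm L}$, where the asymptotics \eqref{Re1} apply, and its complement through $\chi_{\rm H}$, which contains both the bounded ``middle'' frequencies (where the roots are complex with $\mathrm{Re}\,\lambda_{1,2} = -\tfrac12(|\xi|^{2\sigma_1}+|\xi|^{2\sigma_2}) \le -c < 0$) and the genuinely high frequencies governed by \eqref{Re2}. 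On both pieces of the complement the multiplier decays exponentially in $t$, so that region contributes only the regularity demands $H^{s+2(j-1)\sigma_2}$ (resp. $H^{s}$), while the polynomial decay rates come exclusively from the low-frequency region.

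For the low-frequency bounds I would start from the representation $\widehat{K_1}(t,\xi) = t\int_0^1 e^{(\theta\lambda_1+(1-\theta)\lambda_2)t}\,d\theta$, which together with \eqref{Re1} yields $|\widehat{K_1}(t,\xi)| \lesssim \min\{t,\,|\xi|^{-2\sigma_1}\}\, e^{-c|\xi|^{2(\sigma-\sigma_1)}t}$, plus the analogous bounds for $\partial_t^j|\xi|^s\widehat{K_1}$ after differentiating and inserting $|\xi|^s$. For the $(L^m\cap L^2)$--$L^2$ estimate I would then apply the Hausdorff--Young inequality $\|\widehat{\varphi}\|_{L^{m'}} \lesssim \|\varphi\|_{L^m}$ and Hölder's inequality with $q$ determined by $\tfrac1q = \tfrac1m-\tfrac12$, reducing the low-frequency part to $\|\chi_{\rm L}\,\partial_t^j|\xi|^s\widehat{K_1}\|_{L^q_\xi}$. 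Passing to polar coordinates and rescaling via $\rho = |\xi|\,t^{1/(2(\sigma-\sigma_1))}$ produces exactly the exponent $-\frac{n}{2(\sigma-\sigma_1)}(\frac1m-\frac12)+\frac{\sigma_1}{\sigma-\sigma_1}-\frac{s}{2(\sigma-\sigma_1)}-j$; the only point to verify is integrability of the rescaled integrand at the origin, which holds precisely when $n > 2\sigma_1 q = \tfrac{4m\sigma_1}{2-m}$, i.e. when $m < \tfrac{2n}{n+4\sigma_1}$ --- this is the origin of condition \eqref{condition_Local_1.1.1}.

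For the two $L^2$--$L^2$ estimates I would instead bound the low-frequency contribution by $\|\chi_{\rm L}\,\partial_t^j|\xi|^s\widehat{K_1}\|_{L^\infty_\xi}\|\varphi\|_{L^2}$ and optimize in $|\xi|$, noting that on low frequencies $\|\varphi\|_{L^2}$ and $\|\varphi\|_{H^{s-2\sigma_2}}$ (resp. $\|\varphi\|_{H^s}$) are comparable. Here the dichotomy in the statement appears naturally: for $\widehat{K_1}$ with $s\ge 2\sigma_1$ the bound $|\xi|^{s-2\sigma_1}e^{-c|\xi|^{2(\sigma-\sigma_1)}t}$ is non-singular and its supremum is $\sim t^{-\frac{s}{2(\sigma-\sigma_1)}+\frac{\sigma_1}{\sigma-\sigma_1}}$, whereas for $s<2\sigma_1$ that bound blows up at the origin and one must use $|\widehat{K_1}|\lesssim t\,e^{-c|\xi|^{2(\sigma-\sigma_1)}t}$ instead, whose weighted supremum is $\sim t^{\,1-\frac{s}{2(\sigma-\sigma_1)}}$, explaining the ``$+1$''. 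The $\partial_t K_1$ estimate is handled analogously; differentiating in $t$ brings down factors $\lambda_{1,2}$, and a short check shows that the resulting low-frequency supremum still respects the claimed rate $t^{-s/(2(\sigma-\sigma_1))}$. In every case the complementary region contributes $\lesssim e^{-ct}\|\varphi\|_{H^{\cdots}} \lesssim (1+t)^{-N}\|\varphi\|_{H^{\cdots}}$, which is absorbed into the slower polynomial rate.

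The main obstacle I anticipate is the sharp low-frequency multiplier analysis: one must track the crossover of $e^{\lambda_1 t}$ and $e^{\lambda_2 t}$ across the region $|\xi|^{2\sigma_1}t \sim 1$, select in each sub-case the correct of the two competing bounds $\min\{t,|\xi|^{-2\sigma_1}\}$, and confirm that the frequency integrals converge exactly under the stated hypotheses on $m$. By contrast the middle- and high-frequency estimates are routine once the uniform exponential decay $e^{-ct}$ is isolated, and the regularity index $s+2(j-1)\sigma_2$ there is dictated simply by the factor $|\xi|^{-2\sigma_2}$ coming from $\lambda_1-\lambda_2\sim|\xi|^{2\sigma_2}$ in \eqref{Re2}, together with the $|\xi|^{2j(\sigma-\sigma_2)}$ produced by the time derivatives.
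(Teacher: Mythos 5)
The paper does not actually prove this lemma itself --- it simply invokes the proof of Theorem 2.1 in \cite{DaoDuongNguyen2024} ``with some minor modifications'' --- and your outline is precisely the standard phase-space multiplier argument underlying that reference: Plancherel, a low/middle/high frequency splitting driven by the asymptotics \eqref{Re1}--\eqref{Re2}, Hausdorff--Young plus H\"older with $1/q=1/m-1/2$ on the low-frequency zone, and uniform exponential decay on the rest. Your derivation of the threshold $m<\tfrac{2n}{n+4\sigma_1}$ from integrability of $|\xi|^{(s-2\sigma_1)q}$ near the origin, and of the $s<2\sigma_1$ versus $s\ge 2\sigma_1$ dichotomy from the competing bounds $\min\{t,|\xi|^{-2\sigma_1}\}$, matches the cited source, so the proposal is correct and takes essentially the same approach.
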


Next, to prove our main theorems, the following tools from Harmonic Analysis come into play.
\begin{lemma}[Fractional Gagliardo-Nirenberg inequality, see \cite{Ozawa}] \label{FractionalG-N}
Let $1<q,\, q_1,\, q_2<\infty$, $a >0$ and $\theta\in [0,a)$. Then, it holds
$$ \|u\|_{\dot{H}^{\theta}_q}\lesssim \|u\|_{L^{q_1}}^{1-\eta_{\theta,a}}\,\, \|u\|_{\dot{H}^a_{q_2}}^{\eta_{\theta,a}}, $$
where
$$ \eta_{\theta,a}(q,q_1,q_2)=\frac{\frac{1}{q_1}-\frac{1}{q}+\frac{\theta}{n}}{\frac{1}{q_1}-\frac{1}{q_2}+\frac{a}{n}}\quad \text{ and }\quad \frac{\theta}{a}\leq \eta_{\theta, a}\leq 1. $$
\end{lemma}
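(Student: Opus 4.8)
The exponent $\eta_{\theta,a}$ is forced by scaling, so the whole content of the statement is the boundedness of the implicit constant; I would establish it by a homogeneous Littlewood--Paley decomposition combined with Bernstein's inequality. First I would record the scaling heuristic: writing $u_\lambda(x):=u(\lambda x)$, the three quantities $\|u_\lambda\|_{\dot H^{\theta}_q}$, $\|u_\lambda\|_{L^{q_1}}$ and $\|u_\lambda\|_{\dot H^{a}_{q_2}}$ transform by the explicit powers $\lambda^{\theta-n/q}$, $\lambda^{-n/q_1}$ and $\lambda^{a-n/q_2}$, and matching the $\lambda$-exponents on the two sides of the claimed inequality forces exactly the stated value of $\eta_{\theta,a}$. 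Hence no generality is lost in proving the estimate for a single normalization. I then fix a homogeneous Littlewood--Paley partition of unity, writing $u=\sum_{j\in\Z}P_j u$ where $P_j$ localizes the Fourier support to the annulus $|\xi|\sim 2^{j}$, and recall the two forms of Bernstein's inequality for such frequency-localized pieces: $\||D|^{s}P_j v\|_{L^{r}}\sim 2^{js}\|P_j v\|_{L^{r}}$ and $\|P_j v\|_{L^{r}}\lesssim 2^{jn(1/\rho-1/r)}\|P_j v\|_{L^{\rho}}$ for $\rho\le r$, together with the $L^{\rho}$-boundedness of $P_j$ for $1<\rho<\infty$.

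The core is a two-sided per-block comparison. Bounding low frequencies against the $L^{q_1}$ norm gives
$$ 2^{j\theta}\|P_j u\|_{L^{q}}\lesssim 2^{j\alpha_1}\|u\|_{L^{q_1}},\qquad \alpha_1:=\theta+n\Big(\tfrac{1}{q_1}-\tfrac{1}{q}\Big), $$
while bounding high frequencies against $\dot H^{a}_{q_2}$ gives
$$ 2^{j\theta}\|P_j u\|_{L^{q}}\lesssim 2^{j\alpha_2}\|u\|_{\dot H^{a}_{q_2}},\qquad \alpha_2:=\theta-a+n\Big(\tfrac{1}{q_2}-\tfrac{1}{q}\Big). $$
A direct computation gives $\alpha_1/(\alpha_1-\alpha_2)=\eta_{\theta,a}$, and the hypotheses $\theta/a\le\eta_{\theta,a}\le1$ ensure $\alpha_1\ge 0$ and $\alpha_2\le 0$, which is precisely what makes the two geometric series converge away from their endpoints. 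Summing $2^{j\theta}\|P_j u\|_{L^q}$ over $j\le N$ with the first bound and over $j>N$ with the second, I obtain a bound of the form $2^{N\alpha_1}\|u\|_{L^{q_1}}+2^{N\alpha_2}\|u\|_{\dot H^a_{q_2}}$; optimizing the free cut-off $N\in\R$ to balance the two terms yields the product $\|u\|_{L^{q_1}}^{1-\eta_{\theta,a}}\|u\|_{\dot H^a_{q_2}}^{\eta_{\theta,a}}$ with exactly the right weights.

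The main obstacle is that the naive triangle-inequality summation above controls the stronger homogeneous Besov norm $\|u\|_{\dot B^{\theta}_{q,1}}=\sum_j 2^{j\theta}\|P_ju\|_{L^q}$, whereas the left-hand side is the Riesz-potential norm $\|u\|_{\dot H^\theta_q}=\|u\|_{\dot F^\theta_{q,2}}$. For $1<q<\infty$ one closes this gap through the standard embedding $\dot B^{\theta}_{q,1}\hookrightarrow \dot F^{\theta}_{q,2}$ (valid since $1\le\min\{q,2\}$), so that controlling the Besov norm suffices. A second delicate point is that Bernstein's second inequality as used above requires $q_1\le q$ and $q_2\le q$; in the complementary ranges, and at the two endpoints $\eta_{\theta,a}\in\{\theta/a,\,1\}$ where one of the geometric series degenerates, one must argue more carefully. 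I would handle the endpoint $\eta_{\theta,a}=1$ as the pure Sobolev embedding $\dot H^{a}_{q_2}\hookrightarrow\dot H^{\theta}_{q}$ and the endpoint $\eta_{\theta,a}=\theta/a$ as the interpolation case between $L^{q_1}$ and $\dot H^{a}_{q_1}$, and then recover the remaining configurations either by duality (testing $|D|^{\theta}u$ against $L^{q'}$) or by complex interpolation between the interior estimates. Since the precise constant is immaterial here, I would present the interior case $q_1,q_2\le q$ in full and reduce the boundary and endpoint configurations to it as limiting cases.
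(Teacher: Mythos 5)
The paper offers no proof of this lemma at all: it is imported verbatim from the cited reference of Hajaiej, Molinet, Ozawa and Wang, so any proof you give is by definition a different route. Your sketch is the standard Littlewood--Paley argument and its core computations are correct: the identity $\alpha_1/(\alpha_1-\alpha_2)=\eta_{\theta,a}$ checks out, the constraints $\theta/a\le\eta_{\theta,a}\le 1$ do translate into $\alpha_1\ge 0$ and $\alpha_2\le 0$ (given that the denominator $\alpha_1-\alpha_2=a+n(1/q_1-1/q_2)$ is positive), the optimization over the cut-off $N$ produces the right weights, and the passage from $\dot B^{\theta}_{q,1}$ to $\dot H^{\theta}_q=\dot F^{\theta}_{q,2}$ via $\ell^1\hookrightarrow\ell^{\min\{q,2\}}$ is legitimate for $1<q<\infty$. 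What this buys over the paper's bare citation is a self-contained and transparent argument in the interior regime $q_1,q_2\le q$, $\theta/a<\eta_{\theta,a}<1$, which covers the way the lemma is actually invoked in Proposition \ref{lemma3.2} and in the contraction estimates (there $q_1=q_2=2$ and $q=p\beta\ge 2$). The thin spot is exactly where the cited paper earns its keep: when $q_1>q$ or $q_2>q$ Bernstein's inequality goes the wrong way, and dispatching those configurations ``by duality or by complex interpolation'' is a genuine piece of work rather than a remark --- the reference establishes necessary and sufficient conditions precisely to handle them. Also, in your discussion of the endpoint $\eta_{\theta,a}=\theta/a$ the space should be $\dot H^{a}_{q_2}$ (not $\dot H^{a}_{q_1}$); the relation $1/q=(1-\theta/a)/q_1+(\theta/a)/q_2$ is the complex-interpolation identity between $L^{q_1}$ and $\dot H^{a}_{q_2}$. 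None of these issues affects the validity of the lemma as used in this paper.
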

Additionally, we need to use the following auxiliary ingredients.
\begin{proposition}\label{lemma3.1}
    Under the assumptions of Theorem \ref{Local-existence}, we have
     \begin{align}
        &\left\|\int_0^t K_1(t-\tau, x)*_{x}\varphi(\tau,x) d\tau \right\|_{X(T)}\notag\\
        &\hspace{1cm}\lesssim 
         \begin{cases}
         \vspace{0.2cm}\|\varphi\|_{Y(T)} &\text{ if } p \geq 1 + \dfrac{2m\sigma}{n-2m\sigma_1},\\
         (1+T)^{1-\frac{n}{2m(\sigma-\sigma_1)}(p-1) + \frac{p\sigma_1}{\sigma-\sigma_1}} \|\varphi\|_{Y(T)} &\text{ if } p < 1 + \dfrac{2m\sigma}{n- 2m\sigma_1} ,
         \end{cases}\label{estimate3.1.1}
    \end{align}
    for any $T > 0$. Moreover, we also obtain
   \begin{align}
        \left\|\int_0^t K_1(t-\tau, x) *_{x} \varphi(\tau, x) d\tau \right\|_{X(T)} \lesssim \int_0^T \|\varphi\|_{Y(\tau)} d \tau, \label{estimate3.1.2}
    \end{align}
    for all $0 < T \leq 1$.
\end{proposition}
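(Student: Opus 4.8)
The plan is to bound separately the two quantities defining the $X(T)$-norm of the Duhamel operator, namely $\|u(t,\cdot)\|_{L^2}$ and $\||D|^{\sigma}u(t,\cdot)\|_{L^2}$, where $u(t,x)=\int_0^t K_1(t-\tau,x)\ast_x\varphi(\tau,x)\,d\tau$. Since $|D|^{\sigma}$ commutes with convolution, Minkowski's integral inequality lets me move the $L^2$-norm inside the time integral, which I then split as $\int_0^t=\int_0^{t/2}+\int_{t/2}^t$. On the far-from-diagonal piece $\tau\in[0,t/2]$ the kernel carries the large argument, $1+t-\tau\sim 1+t$, so I would apply the $(L^m\cap L^2)-L^2$ estimate of Lemma \ref{Proposition3.1} (with $s=0$ for the first component, $s=\sigma$ for the second), pull the kernel decay out of the integral, and feed in the $Y(T)$-bounds for $\|\varphi(\tau,\cdot)\|_{L^\beta}$ and $\|\varphi(\tau,\cdot)\|_{L^2}$. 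On the near-diagonal piece $\tau\in[t/2,t]$ we have $\tau\sim t$, so I would extract the $(1+\tau)\sim(1+t)$ decay coming from the $Y(T)$-norm of $\varphi$ and treat the remaining kernel factor as a time convolution.

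The decisive point is the choice of the exponent $\beta\in[m_1,2]$ on the far-from-diagonal piece. If one uses $\beta=m$, the resulting time integral $\int_0^{t/2}(1+\tau)^{\gamma}\,d\tau$ has exponent $\gamma=-\frac{n}{2(\sigma-\sigma_1)}\,\frac{p-1}{m}+\frac{p\sigma_1}{\sigma-\sigma_1}$, which equals exactly $-1$ at the critical value $p=1+\frac{2m\sigma}{n-2m\sigma_1}$ and therefore produces a logarithmic loss that would break the estimate. To avoid this I would instead invoke Lemma \ref{Proposition3.1} with the parameter $m_1=\max\{1,2/p\}<m$ (this is legitimate: $m_1<m$ because $p>2/m$, and $m_1$ still satisfies \eqref{condition_Local_1.1.1} or equals $1$, since $m_1<m<\frac{2n}{n+4\sigma_1}$). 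Using $m_1$ gives a strictly faster kernel decay, while the $L^{m_1}$-decay of $\varphi$ supplied by $Y(T)$ keeps the time integral a genuine power rather than a logarithm; a direct bookkeeping of exponents shows that the net power of $(1+t)$ exceeds the target $X(T)$-weight by exactly $(1+t)^{\gamma+1}$. Hence when $p\ge 1+\frac{2m\sigma}{n-2m\sigma_1}$ one has $\gamma+1\le 0$ and this factor is bounded, yielding the first line of \eqref{estimate3.1.1}; when $p<1+\frac{2m\sigma}{n-2m\sigma_1}$ one has $\gamma>-1$, so the factor is $(1+t)^{\gamma+1}$ with $\gamma+1=1-\frac{n}{2m(\sigma-\sigma_1)}(p-1)+\frac{p\sigma_1}{\sigma-\sigma_1}$, reproducing exactly the growing factor in the second line of \eqref{estimate3.1.1}.

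For the near-diagonal piece $\int_{t/2}^t$, the decay is carried by $\tau\sim t$ while the kernel argument $t-\tau$ is small. For the $\||D|^{\sigma}u\|_{L^2}$ component the case $s=\sigma\ge 2\sigma_1$ of the $L^2-L^2$ estimate in Lemma \ref{Proposition3.1} already gives a genuinely decaying, time-convolution-integrable kernel. For the $\|u(t,\cdot)\|_{L^2}$ component, however, the case $s=0$ of that estimate degenerates into a growing factor $(1+t-\tau)^{+1}$ when $\sigma_1>0$; here I would instead use the $(L^{m_1}\cap L^2)-L^2$ estimate, whose kernel is integrable against $d\tau$, and the same exponent comparison as on the far piece then gives a contribution consistent with \eqref{estimate3.1.1} in both the critical/supercritical and the subcritical ranges. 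Finally, \eqref{estimate3.1.2} is the short-time statement: for $0<T\le 1$ every weight $(1+t)^{\pm}$ is comparable to a constant, so estimating $u$ directly through Lemma \ref{Proposition3.1} and unwinding the definition of the $Y(T)$-norm gives the bound by $\int_0^T\|\varphi\|_{Y(\tau)}\,d\tau$.

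I expect the main obstacle to be the exponent bookkeeping at the critical value $p=1+\frac{2m\sigma}{n-2m\sigma_1}$: one must verify that trading $m$ for $m_1$ in the far-from-diagonal estimate converts the borderline logarithm into a power whose exponent coincides with the $X(T)$-weight, and simultaneously that the near-diagonal $L^2$-estimate does not reintroduce uncontrolled growth when $\sigma_1>0$. Both ultimately reduce to scalar inequalities among $n,\sigma,\sigma_1,p,m$ that are guaranteed precisely by the conditions \eqref{condition_Local_1.1.1}--\eqref{condition_Local_1.2} (in particular $n>2m\sigma_1$ and $p>2/m$).
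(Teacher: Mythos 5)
Your proposal follows essentially the same route as the paper's proof: the same splitting of the Duhamel integral at $\tau=t/2$, the same application of Lemma \ref{Proposition3.1} with the smaller index $m_1=\max\{1,2/p\}<m$ on $[0,t/2]$ (which is the key point turning the borderline logarithm at $p=1+\frac{2m\sigma}{n-2m\sigma_1}$ into the exact power $1-\frac{n}{2m(\sigma-\sigma_1)}(p-1)+\frac{p\sigma_1}{\sigma-\sigma_1}$), and the same use of the $L^2$--$L^2$ estimate with $s=\sigma\ge 2\sigma_1$ for the $|D|^{\sigma}$-component on $[t/2,t]$. The only deviation is that on the near-diagonal $L^2$-component the paper measures $\varphi$ in $L^m\cap L^2$, which keeps the kernel exponent inside $(-1,0]$ so that the convolution integral produces exactly the compensating power, whereas you propose $L^{m_1}\cap L^2$ and claim integrability of the kernel; the exponent bookkeeping still closes in the same way, except at the single borderline configuration $n=4\sigma$, $p=2$ where your choice makes the kernel exponent equal $-1$ and would introduce a spurious logarithm --- an issue avoided by taking the index $m$ there as the paper does.
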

\begin{proof}
    To prove Proposition \ref{lemma3.1}, we will apply Lemma \ref{Proposition3.1} to $s = \sigma$. Indeed, using the following notations:
    \begin{align*}
        I_1(t) := &(1+t)^{\frac{n}{2(\sigma-\sigma_1)}(\frac{1}{m}-\frac{1}{2})-\frac{\sigma_1}{\sigma-\sigma_1}}\left\|\int_0^{t/2} K_1(t-\tau, x)*_{x} \varphi(\tau,x)d\tau\right\|_{L^2} \\
        &\qquad\quad+(1+t)^{\frac{n}{2(\sigma-\sigma_1)}(\frac{1}{m}-\frac{1}{2})+\frac{\sigma-2\sigma_1}{2(\sigma-\sigma_1)}}\left\|\int_0^{t/2} |D|^{\sigma} K_1(t-\tau, x)*_{x} \varphi(\tau,x)d\tau\right\|_{L^2}
    \end{align*}
    and 
    \begin{align*}
         I_2(t) := &(1+t)^{\frac{n}{2(\sigma-\sigma_1)}(\frac{1}{m}-\frac{1}{2})-\frac{\sigma_1}{\sigma-\sigma_1}}\left\|\int_{t/2}^t K_1(t-\tau, x)*_{x} \varphi(\tau,x)d\tau\right\|_{L^2} \\
        &\qquad+(1+t)^{\frac{n}{2(\sigma-\sigma_1)}(\frac{1}{m}-\frac{1}{2})+\frac{\sigma-2\sigma_1}{2(\sigma-\sigma_1)}}\left\|\int_{t/2}^t |D|^{\sigma} K_1(t-\tau, x)*_{x} \varphi(\tau,x)d\tau\right\|_{L^2},
    \end{align*}
    we can see that
    \begin{align*}
        \left\|\int_0^t K_1(t-\tau, x)*_{x}\varphi(\tau,x) d\tau \right\|_{X(T)} \leq \sup_{t \in [0, T)} \left\{I_1(t) + I_2(t)\right\}
    \end{align*}
    for any $T>0$.
Applying the $L^{m_1}-L^2$ estimates of Lemma \ref{Proposition3.1} for the term $I_1(t)$ with $j = 0$ and $s = \sigma < 2\sigma_2$, we arrive at
\begin{align*}
    I_1(t)&\lesssim (1+t)^{\frac{n}{2(\sigma-\sigma_1)}(\frac{1}{m}-\frac{1}{2})-\frac{\sigma_1}{\sigma-\sigma_1}} \\
    &\hspace{1cm}\times\int_0^{t/2}(1+t-\tau)^{-\frac{n}{2(\sigma-\sigma_1)}(\frac{1}{m_1}-\frac{1}{2})+\frac{\sigma_1}{\sigma-\sigma_1}} \|\varphi(\tau,\cdot)\|_{L^{m_1}\cap L^2}d\tau\\
        &\qquad
        + (1+t)^{\frac{n}{2(\sigma-\sigma_1)}(\frac{1}{m}-\frac{1}{2})+\frac{\sigma-2\sigma_1}{2(\sigma-\sigma_1)}} \\
        &\hspace{1cm}\times\int_0^{t/2}(1+t-\tau)^{-\frac{n}{2(\sigma-\sigma_1)}(\frac{1}{m_1}-\frac{1}{2})-\frac{\sigma-2\sigma_1}{2(\sigma-\sigma_1)}} \|\varphi(\tau,\cdot)\|_{L^{m_1} \cap L^2}d\tau\\
        &=: I_{1,1}(t) + I_{1,2}(t).
\end{align*}
 From the norm definition of the space $Y(T)$, we have the following estimates:
\begin{align}
    \|\varphi(\tau,\cdot)\|_{L^{m_1}} &\lesssim (1+\tau)^{-\frac{n}{2(\sigma-\sigma_1)}(\frac{p}{m}-\frac{1}{m_1})+ \frac{p\sigma_1}{\sigma-\sigma_1}}\|\varphi\|_{Y(\tau)},\label{E1}\\
     \|\varphi(\tau,\cdot)\|_{L^2} &\lesssim (1+\tau)^{-\frac{n}{2(\sigma-\sigma_1)}(\frac{p}{m}-\frac{1}{2})+ \frac{p\sigma_1}{\sigma-\sigma_1}}\|\varphi\|_{Y(\tau)}.  \label{E2}
\end{align}
For this reason and $m > m_1$, one has
\begin{align*}
    I_{1,1}(t) &\lesssim (1+t)^{-\frac{n}{2(\sigma-\sigma_1)}(\frac{1}{m_1}-\frac{1}{m})} \int_0^{t/2} (1+\tau)^{-\frac{n}{2(\sigma-\sigma_1)}(\frac{p}{m}-\frac{1}{m_1})+\frac{p\sigma_1}{\sigma-\sigma_1}} \|\varphi\|_{Y(\tau)} d\tau\\
    &\lesssim 
    \begin{cases}
         \vspace{0.2cm}\|\varphi\|_{Y(T)} &\text{ if } p \geq 1 + \dfrac{2m\sigma}{n-2m\sigma_1},\\
         (1+T)^{1-\frac{n}{2m(\sigma-\sigma_1)}(p-1) + \frac{p\sigma_1}{\sigma-\sigma_1}} \|\varphi\|_{Y(T)} &\text{ if } p < 1 + \dfrac{2m\sigma}{n- 2m\sigma_1} ,
         \end{cases}
\end{align*}
and 
\begin{align*}
    I_{1,2}(t) &\lesssim (1+t)^{-\frac{n}{2(\sigma-\sigma_1)}(\frac{1}{m_1}-\frac{1}{m})} \int_0^{t/2} (1+\tau)^{-\frac{n}{2(\sigma-\sigma_1)}(\frac{p}{m}-\frac{1}{m_1})+\frac{p\sigma_1}{\sigma-\sigma_1}} \|\varphi\|_{Y(\tau)} d\tau\\
    &\lesssim 
    \begin{cases}
         \|\varphi\|_{Y(T)} &\text{ if } p \geq 1 + \dfrac{2m\sigma}{n-2m\sigma_1},\\
         (1+T)^{1-\frac{n}{2m(\sigma-\sigma_1)}(p-1) + \frac{p\sigma_1}{\sigma-\sigma_1}} \|\varphi\|_{Y(T)} &\text{ if } p < 1 + \dfrac{2m\sigma}{n- 2m\sigma_1},
         \end{cases} 
\end{align*}
for all $t \in [0, T)$, provided that
\begin{align*}
    1 - \frac{n}{2m(\sigma-\sigma_1)}(p-1) + \frac{p\sigma_1}{\sigma-\sigma_1} \leq 0, &\text{ i.e. } p \geq 1 +\frac{2m\sigma}{n-2m\sigma_1}.
\end{align*}
As a consequence, it follows that
\begin{align*}
    I_1(t) \lesssim \begin{cases}
         \vspace{0.2cm}\|\varphi\|_{Y(T)} &\text{ if } p \geq 1 + \dfrac{2m\sigma}{n-2m\sigma_1},\\
         (1+T)^{1-\frac{n}{2m(\sigma-\sigma_1)}(p-1) + \frac{p\sigma_1}{\sigma-\sigma_1}} \|\varphi\|_{Y(T)} &\text{ if } p < 1 + \dfrac{2m\sigma}{n- 2m\sigma_1} ,
         \end{cases}
\end{align*}
for all $t \in [0, T)$.
Next we will estimate the term $I_2(t)$. Noting that $m \in (m_1, 2]$, one finds
\begin{align}
    \|\varphi(\tau,\cdot)\|_{L^{m}} &\lesssim (1+\tau)^{-\frac{n}{2m(\sigma-\sigma_1)}(p-1)+ \frac{p\sigma_1}{\sigma-\sigma_1}}\|\varphi\|_{Y(\tau)}. \label{E3} 
\end{align}
Using (\ref{E2}) and (\ref{E3}), we can proceed with the following steps:
\begin{align*}
    I_2(t) \lesssim& (1+t)^{\frac{n}{2(\sigma-\sigma_1)}(\frac{1}{m}-\frac{1}{2})-\frac{\sigma_1}{\sigma-\sigma_1}} \int_{t/2}^t (1+t-\tau)^{-\frac{n}{2(\sigma-\sigma_1)}(\frac{1}{m}-\frac{1}{2})+\frac{\sigma_1}{\sigma-\sigma_1}} \|\varphi(\tau,\cdot)\|_{L^m \cap L^2} d\tau\\
    &+ (1+t)^{\frac{n}{2(\sigma-\sigma_1)}(\frac{1}{m}-\frac{1}{2})+\frac{\sigma-2\sigma_1}{2(\sigma-\sigma_1)}} \int_{t/2}^t (1+t-\tau)^{-\frac{\sigma-2\sigma_1}{2(\sigma-\sigma_1)}} \|\varphi(\tau,\cdot)\|_{L^2} d\tau\\
    \lesssim& (1+t)^{\frac{n}{2(\sigma-\sigma_1)}(\frac{1}{m}-\frac{1}{2})-\frac{\sigma_1}{\sigma-\sigma_1}} \\
    &\hspace{1cm}\int_{t/2}^t (1+t-\tau)^{-\frac{n}{2(\sigma-\sigma_2)}(\frac{1}{m}-\frac{1}{2})+\frac{\sigma_1}{\sigma-\sigma_1}} (1+\tau)^{-\frac{n}{2m(\sigma-\sigma_1)}(p-1) + \frac{p\sigma_1}{\sigma-\sigma_1}} \|\varphi\|_{Y(\tau)}d\tau\\
    &+ (1+t)^{\frac{n}{2(\sigma-\sigma_1)}(\frac{1}{m}-\frac{1}{2})+\frac{\sigma-2\sigma_1}{2(\sigma-\sigma_1)}}\\
    &\hspace{1cm}\int_{t/2}^t (1+t-\tau)^{-\frac{\sigma-2\sigma_1}{2(\sigma-\sigma_1)}} (1+\tau)^{-\frac{n}{2(\sigma-\sigma_1)}(\frac{p}{m}-\frac{1}{2})+\frac{p\sigma_1}{\sigma-\sigma_1}} \|\varphi\|_{Y(\tau)} d\tau \\
    \lesssim&
     \begin{cases}
         \vspace{0.2cm}\|\varphi\|_{Y(T)} &\text{ if } p \geq 1 + \dfrac{2m\sigma}{n-2m\sigma_1},\\
         (1+T)^{1-\frac{n}{2m(\sigma-\sigma_1)}(p-1) + \frac{p\sigma_1}{\sigma-\sigma_1}} \|\varphi\|_{Y(T)} &\text{ if } p < 1 + \dfrac{2m\sigma}{n- 2m\sigma_1},
         \end{cases}
\end{align*}
for all $t \in [0, T)$, where we applied Lemma \ref{Proposition3.1} in the first estimate and the fact that
\begin{align*}
   0 \leq  \frac{n}{2(\sigma-\sigma_1)}\left(\frac{1}{m}-\frac{1}{2}\right)-\frac{\sigma_1}{\sigma-\sigma_1} < 1 &\text{ together with } 0 < \frac{\sigma-2\sigma_1}{2(\sigma-\sigma_1)} < 1
\end{align*}
for the last estimate. From this, we may conclude the estimate (\ref{estimate3.1.1}). The estimate (\ref{estimate3.1.2}) can be deduced from the proof steps above. Thus, Proposition \ref{lemma3.1} has been proved.
\end{proof}

\begin{proposition}\label{lemma3.2}
    Under the assumptions of Theorem \ref{Local-existence}, we have
    \begin{align*}
        \| |u|^p\|_{Y(T)} \lesssim \|u\|_{X(T)}^p
    \end{align*}
    for all $T \in (0,\infty)$.
\end{proposition}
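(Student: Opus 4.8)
The plan is to reduce the whole proposition to a single Gagliardo--Nirenberg interpolation. First I would observe that the two pieces of the $Y(T)$-norm are really one family. Since
\[
\frac{np}{2(\sigma-\sigma_1)}\Big(\tfrac1m-\tfrac1{2p}\Big)-\frac{p\sigma_1}{\sigma-\sigma_1}
=\frac{n}{2(\sigma-\sigma_1)}\Big(\tfrac pm-\tfrac12\Big)-\frac{p\sigma_1}{\sigma-\sigma_1},
\]
the $L^2$-weight is exactly the $L^\beta$-weight evaluated at $\beta=2$. Hence it suffices to show, uniformly in $t\in[0,T)$ and $\beta\in[m_1,2]$, that
\[
(1+t)^{\frac{n}{2(\sigma-\sigma_1)}(\frac pm-\frac1\beta)-\frac{p\sigma_1}{\sigma-\sigma_1}}\big\||u(t,\cdot)|^p\big\|_{L^\beta}\lesssim\|u\|_{X(T)}^p .
\]
Writing $\||u|^p\|_{L^\beta}=\|u\|_{L^{p\beta}}^p$, the task becomes a decay bound for $\|u(t,\cdot)\|_{L^{p\beta}}$.

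Next I would apply Lemma \ref{FractionalG-N} with $\theta=0$, $a=\sigma$, $q_1=q_2=2$ and $q=p\beta$, which yields
\[
\|u(t,\cdot)\|_{L^{p\beta}}\lesssim\|u(t,\cdot)\|_{L^2}^{1-\eta}\,\big\||D|^{\sigma}u(t,\cdot)\big\|_{L^2}^{\eta},
\qquad \eta=\frac n\sigma\Big(\tfrac12-\tfrac1{p\beta}\Big).
\]
The admissibility $0\le\eta\le1$ is precisely where the hypotheses enter: $\eta\ge0$ holds because $p\beta\ge p\,m_1=\max\{p,2\}\ge2$ by the choice $m_1=\max\{1,2/p\}$, while $\eta\le1$ is equivalent to $p\beta\le 2n/(n-2\sigma)$. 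Since $\beta\le2$ gives $p\beta\le2p$, in the range $n>2\sigma$ the bound $p\le n/(n-2\sigma)$ from \eqref{condition_Local_1.2} forces $2p\le 2n/(n-2\sigma)$, so the interpolation is legitimate for every $\beta\in[m_1,2]$; when $n\le2\sigma$ there is no upper restriction and $\dot H^\sigma\hookrightarrow L^{p\beta}$ anyway.

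Then I would insert the two decay rates built into the definition of $\|\cdot\|_{X(T)}$, namely
\[
\|u(t,\cdot)\|_{L^2}\lesssim(1+t)^{-\frac{n}{2(\sigma-\sigma_1)}(\frac1m-\frac12)+\frac{\sigma_1}{\sigma-\sigma_1}}\|u\|_{X(T)},\quad
\big\||D|^{\sigma}u(t,\cdot)\big\|_{L^2}\lesssim(1+t)^{-\frac{n}{2(\sigma-\sigma_1)}(\frac1m-\frac12)-\frac{\sigma-2\sigma_1}{2(\sigma-\sigma_1)}}\|u\|_{X(T)},
\]
and collect the powers of $(1+t)$. Using the identity $\frac{\sigma_1}{\sigma-\sigma_1}+\frac{\sigma-2\sigma_1}{2(\sigma-\sigma_1)}=\frac{\sigma}{2(\sigma-\sigma_1)}$, the $\eta$-weighted combination telescopes: the term $\eta\cdot\frac{\sigma}{2(\sigma-\sigma_1)}=\frac{n}{2(\sigma-\sigma_1)}(\frac12-\frac1{p\beta})$ absorbs the two halves, leaving the time exponent
\[
-\frac{n}{2(\sigma-\sigma_1)}\Big(\tfrac1m-\tfrac1{p\beta}\Big)+\frac{\sigma_1}{\sigma-\sigma_1}
\]
for $\|u(t,\cdot)\|_{L^{p\beta}}$. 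Raising to the $p$-th power produces exactly $(1+t)^{-\frac{n}{2(\sigma-\sigma_1)}(\frac pm-\frac1\beta)+\frac{p\sigma_1}{\sigma-\sigma_1}}\|u\|_{X(T)}^p$ for $\||u|^p\|_{L^\beta}$, which cancels the $Y(T)$-weight; taking the supremum over $t$ and over $\beta\in[m_1,2]$ closes the argument. The only genuinely delicate point is the uniform-in-$\beta$ verification of $\eta\in[0,1]$ — in particular that the extreme case $\beta=2$ (largest $p\beta$) stays admissible under \eqref{condition_Local_1.2} and that $p\beta=\max\{p,2\}$ keeps $\eta\ge0$ — while the rest is bookkeeping of exponents.
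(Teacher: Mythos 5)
Your proof is correct and follows essentially the same route as the paper: the fractional Gagliardo--Nirenberg inequality with $\theta=0$, $a=\sigma$, $q_1=q_2=2$, $q=p\beta$ (so $\eta=\tfrac n\sigma(\tfrac12-\tfrac1{p\beta})$, the paper's $\theta_1$), followed by insertion of the $X(T)$ decay rates; your admissibility check of $\eta\in[0,1]$ via $m_1=\max\{1,2/p\}$ and $p\le n/(n-2\sigma)$ is exactly where condition \eqref{condition_Local_1.2} enters in the paper as well. The only (harmless) difference is that you note the $L^2$ component of the $Y(T)$-norm is the $\beta=2$ instance of the $L^\beta$ family and so handle both in one computation, whereas the paper writes the two cases out separately.
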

\begin{proof}
    Applying Lemma \ref{FractionalG-N} for any $t \in [0, T)$ and $ \beta \in [m_1, 2]$, we obtain
    \begin{align*}
        \||u(t,\cdot)|^p\|_{L^{\beta}} = \|u(t,\cdot)\|_{L^{p\beta}}^p &\lesssim \|u(t, \cdot)\|_{L^2}^{p(1-\theta_1)} \||D|^{\sigma} u(t,\cdot)\|_{L^2}^{p \theta_1}\\ 
        &\lesssim (1+t)^{-\frac{n}{2(\sigma-\sigma_1)}(\frac{p}{m}-\frac{1}{\beta})+\frac{p\sigma_1}{\sigma-\sigma_1}}\|u\|_{X(T)}^p,\\
    \||u(t,\cdot)|^p\|_{L^2} = \|u(t,\cdot)\|_{L^{2p}}^p &\lesssim \|u(t, \cdot)\|_{L^2}^{p(1-\theta_2)} \||D|^{\sigma} u(t,\cdot)\|_{L^2}^{p \theta_2}\\
    &\lesssim (1+t)^{-\frac{n}{2(\sigma-\sigma_1)}(\frac{p}{m}-\frac{1}{2})+\frac{p\sigma_1}{\sigma-\sigma_1}}\|u\|_{X(T)}^p,
    \end{align*}
    where
    $$ \theta_1 = \frac{n}{\sigma}\left(\frac{1}{2}-\frac{1}{p\beta}\right) \in [0,1] \text{ and } \theta_2 = \frac{n}{\sigma}\left(\frac{1}{2}-\frac{1}{2p}\right) \in [0,1], $$
    that is, 
    $$p \in \left(1, \frac{n}{n-2\sigma}\right] \text{ if } n > 2\sigma \text{ or } p \in (1, \infty) \text{ if }n \leq 2\sigma. $$
    Thus, Proposition \ref{lemma3.2} has been proved.
\end{proof}
\subsection{Proof of Theorem \ref{Local-existence}}
Let $0 < T \leq 1$. From Lemma \ref{Proposition3.1} we derive
\begin{align}
    \left\| K_0(t,x)*_{x}u_0(x)+ K_1(t,x)*_{x}u_1(x)\right\|_{X(T)} \leq C_1\|(u_0, u_1)\|_{\mathcal{D}_{\sigma}^m} =: \frac{M}{2} \label{Constant1}
\end{align}
with some constant $C_1 > 0$, which is independent of $T$, and the following relation:
\begin{align*}
    \widehat{K_0}(t,\xi) = (|\xi|^{2\sigma_2}+|\xi|^{2\sigma_1})\widehat{K_1}(t,\xi) +\partial_t \widehat{K_1}(t,\xi).
\end{align*}
We consider the function space $X(T, M\varepsilon)$ with $u \in X(T, M\varepsilon)$ and define the mapping
\begin{align}
    \mathcal{N}[u](t,x) := \varepsilon K_0(t,x)*_{x}u_0(x) + \varepsilon K_1(t,x)*_{x}u_1(x)+ \int_0^t K_1(t-\tau,x)*_{x}|u(\tau,x)|^p d\tau. \label{map1}
\end{align}
From the estimate (\ref{estimate3.1.2}) and Proposition \ref{lemma3.2} we arrive at
\begin{align*}
    \left\|\int_0^t K_1(t-\tau,x)*_{x}|u(\tau,x)|^p d\tau\right\|_{X(T)} &\lesssim \int_0^T \||u|^p\|_{Y(\tau)} d\tau \lesssim T M^p \varepsilon^p.
\end{align*}
Thus, it follows that
\begin{equation}\label{estimate1.2.1}
    \|\mathcal{N}[u]\|_{X(T)} \leq \frac{M\varepsilon}{2} + C_2 T M^p\varepsilon^p .
\end{equation}
For $u, v \in X(T,M\varepsilon)$, we intend to prove the following inequality:
\begin{equation}\label{estimate1.2.2}
    \|\mathcal{N}[u]-\mathcal{N}[v]\|_{Z(T)} \leq C_3 T M^{p-1}  \varepsilon^{p-1}  \|u-v\|_{Z(T)}.
\end{equation}
To get (\ref{estimate1.2.2}), we define the constants $q$ and $\gamma$ as follows:
\begin{align*}
   \frac{1}{q} =
    \begin{cases}
        \delta &\text{ if } n \leq 2\sigma \\
        \displaystyle\frac{(p-1)(n-2\sigma)}{2n} &\text{ if } n > 2\sigma
    \end{cases} 
    \quad \text{ and }\quad
    \frac{1}{\gamma} = \frac{1}{q}+\frac{1}{2}
\end{align*}
with a sufficiently small constant $\delta > 0$. We see that condition $p \leq n/(n-2\sigma)$ if $n > 2\sigma$ implies $q > 2$ and $\gamma \in (1, 2)$. Due to $0 < t < T \leq 1$, combining with Lemma \ref{Proposition3.1} one has
\begin{align*}
    &\left\|\int_0^t K_1(t-\tau,x)*_{x} \big(|u(\tau,x)|^p-|v(\tau,x)|^p\big)d\tau\right\|_{L^2}\\
    &\quad\lesssim \int_0^t (1+t-\tau)^{-\frac{n}{2(\sigma-\sigma_1)}(\frac{1}{\gamma}-\frac{1}{2})+\frac{\sigma_1}{\sigma-\sigma_1}} \||u(\tau,\cdot)|^p-|v(\tau,\cdot)|^p\|_{L^{\gamma} \cap H^{-2\sigma_2}}d\tau\\
    &\quad\lesssim \int_0^t (1+t-\tau)^{-\frac{n}{2(\sigma-\sigma_1)}(\frac{1}{\gamma}-\frac{1}{2})+\frac{\sigma_1}{\sigma-\sigma_1}} \||u(\tau,\cdot)|^p-|v(\tau,\cdot)|^p\|_{L^{\gamma} \cap H^{-\sigma}}d\tau\\
    &\quad\lesssim \int_0^t \||u(\tau,\cdot)|^p-|v(\tau,\cdot)|^p\|_{L^{\gamma} \cap H^{-\sigma}}d\tau .
    \end{align*}
    To estimate the norm in the previous integral, we use the embedding 
    \begin{align}
        \| \psi\|_{L^2} \lesssim \| \psi\|_{H_{\gamma}^{\sigma}}, \label{embedding}
    \end{align}
    with $1/2 + \sigma/n \geq 1/\gamma$. As a result, we achieve
    \begin{align*}
        \left\|\mathcal{N}[u]- \mathcal{N}[v]\right\|_{Z(T)} &\lesssim \sup_{t \in [0, T)}\int_0^t \||u(\tau,\cdot)|^p-|v(\tau,\cdot)|^p\|_{L^{\gamma}} d\tau\\
        &\lesssim \sup_{t \in [0, T)}\int_0^t  \|u(\tau,\cdot)-v(\tau,\cdot)\|_{L^2}\left(\|u(\tau,\cdot)\|_{L^{q(p-1)}}^{p-1}+\|v(\tau,\cdot)\|_{L^{q(p-1)}}^{p-1}\right) d\tau\\
        &\lesssim T \|u-v\|_{Z(T)} \left(\|u\|_{X(T)}^{p-1} + \|v\|_{X(T)}^{p-1}\right). 
    \end{align*}
    In the second estimate, we have used H\"older's inequality with $1/\gamma = 1/2 +1/q$. Meanwhile, in the last estimate, we have utilized Lemma \ref{FractionalG-N} for the norms
    $$\|u(\tau,\cdot)\|_{L^{q(p-1)}}, \, \|v(\tau,\cdot)\|_{L^{q(p-1)}} $$ 
    and the estimate (\ref{estimate3.1.2}). From this, we can conclude the estimate (\ref{estimate1.2.2}). Finally, by fixing $T=T(C_2, C_3,\varepsilon)$ as a sufficiently small positive constant, we can see that $\mathcal{N}$ is a contraction mapping in $X(T,M\varepsilon)$ with the metric $Z(T)$. From the application of Banach's fixed point theorem, we may conclude Theorem \ref{Local-existence}.
\subsection{Proof of Theorem \ref{Global-Existance}} For any 
$T > 0$, we define the constant $M$ in (\ref{Constant1}) and consider the mapping $\mathcal{N}$ in (\ref{map1}) on $X(T, M\varepsilon)$.
For any $u \in X(T,M\varepsilon)$, from Propositions \ref{lemma3.1} and \ref{lemma3.2} we arrive at
\begin{equation}\label{estimate1.3.1}
    \|\mathcal{N}[u]\|_{X(T)} \leq \frac{M\varepsilon}{2} + C_1 M^p\varepsilon^p.
\end{equation}
For $u, v \in X(T,M\varepsilon)$, we are going to prove the following inequality:
\begin{equation}\label{estimate1.3.2}
    \|\mathcal{N}[u]-\mathcal{N}[v]\|_{Z(T)} \leq C_2M^{p-1}\varepsilon^{p-1} \|u-v\|_{Z(T)}.
\end{equation}
To obtain (\ref{estimate1.3.2}), we define the constants $q$ and $\gamma$ as in the proof of Theorem \ref{Local-existence}.
 The fact is that
\begin{align*}
   &\left\|\int_0^t K_1(t-\tau,x)*_{x} \big(|u(\tau,x)|^p-|v(\tau,x)|^p\big)d\tau\right\|_{L^2}\\
    &\quad\lesssim \int_0^t (1+t-\tau)^{-\frac{n}{2(\sigma-\sigma_1)}(\frac{1}{\gamma}-\frac{1}{2})+\frac{\sigma_1}{\sigma-\sigma_1}} \||u(\tau,\cdot)|^p-|v(\tau,\cdot)|^p\|_{L^{\gamma} \cap H^{-2\sigma_2}}d\tau\\
    &\quad\lesssim \int_0^t (1+t-\tau)^{-\frac{n}{2(\sigma-\sigma_1)}(\frac{1}{\gamma}-\frac{1}{2})+\frac{\sigma_1}{\sigma-\sigma_1}} \||u(\tau,\cdot)|^p-|v(\tau,\cdot)|^p\|_{L^{\gamma} \cap H^{-\sigma}}d\tau.
    \end{align*}
    To estimate the norm in the last integral, we use the embedding (\ref{embedding}) again to derive
    \begin{align*}
        \||u(\tau,\cdot)|^p-|v(\tau,\cdot)|^p\|_{H^{-\sigma}} \lesssim \||u(\tau,\cdot)|^p-|v(\tau,\cdot)|^p\|_{L^{\gamma}}
    \end{align*}
    as long as $1/2 \geq 1/\gamma - \sigma/n$, which holds from the definition of $\gamma$. Moreover, using H\"older's inequality with $1/\gamma = 1/2 + 1/q$, we get
    \begin{align*}
        &\||u(\tau,\cdot)|^p-|v(\tau,\cdot)|^p\|_{L^{\gamma}} \\
        &\quad\lesssim \|u(\tau,\cdot)-v(\tau,\cdot)\|_{L^2}\left(\|u(\tau,\cdot)\|_{L^{q(p-1)}}^{p-1}+\|v(\tau,\cdot)\|_{L^{q(p-1)}}^{p-1}\right)\\
        &\quad\lesssim (1+\tau)^{\frac{\sigma_1}{\sigma-\sigma_1}} \|u-v\|_{Z(T)} (1+\tau)^{ -\frac{n(p-1)}{2(\sigma-\sigma_1)}(\frac{1}{m}-\frac{1}{q(p-1)})+\frac{\sigma_1}{\sigma-\sigma_1}(p-1)} \left(\|u\|_{X(T)}^{p-1} +\|v\|_{X(T)}^{p-1}\right)\\
        &\quad\lesssim (1+\tau)^{-\frac{n(p-1)}{2(\sigma-\sigma_1)}(\frac{1}{m}-\frac{1}{q(p-1)}) + \frac{p\sigma_1}{\sigma-\sigma_1}} \|u-v\|_{Z(T)}\left(\|u\|_{X(T)}^{p-1} +\|v\|_{X(T)}^{p-1}\right).
    \end{align*}
    In the second estimate, we have used Lemma \ref{FractionalG-N} for the norms $\|u(\tau,\cdot)\|_{L^{q(p-1)}}$ and $\|v(\tau,\cdot)\|_{L^{q(p-1)}}$ under the condition
    $$\frac{n}{\sigma}\left(\frac{1}{2}-\frac{1}{q(p-1)}\right) \in [0, 1]. $$
    For this reason, we obtain
    \begin{align*}
    &\|\mathcal{N}[u]-\mathcal{N}[v]\|_{Z(T)}\\
    &\quad= \sup_{t \in[0, T)} \left\{ (1+t)^{-\frac{\sigma_1}{\sigma-\sigma_1}} \left\|\int_0^t K_1(t-\tau,x)*_{x} \big(|u(\tau,x)|^p-|v(\tau,x)|^p\big)d\tau\right\|_{L^2}\right\}\\
    &\quad\lesssim  \|u-v\|_{Z(T)}\left(\|u\|_{X(T)}^{p-1}+\|v\|_{X(T)}^{p-1}\right)\\
    &\hspace{0.5cm}\times \sup_{t \in [0, T)} \left\{(1+t)^{-\frac{\sigma_1}{\sigma-\sigma_1}}\int_0^t (1+t-\tau)^{-\frac{n}{2(\sigma-\sigma_1)}(\frac{1}{\gamma}-\frac{1}{2})+\frac{\sigma_1}{\sigma-\sigma_1}} (1+\tau)^{-\frac{n(p-1)}{2(\sigma-\sigma_1)}(\frac{1}{m}-\frac{1}{q(p-1)})+\frac{p\sigma_1}{\sigma-\sigma_1}} d\tau\right\}\\
    &\quad\lesssim M^{p-1} \varepsilon^{p-1} \|u-v\|_{Z(T)}\\
    &\hspace{0.5cm}\times \sup_{t \in [0, T)} \left\{(1+t)^{-\frac{\sigma_1}{\sigma-\sigma_1}}\int_0^t (1+t-\tau)^{-\frac{n}{2(\sigma-\sigma_1)}(\frac{1}{\gamma}-\frac{1}{2})+\frac{\sigma_1}{\sigma-\sigma_1}} (1+\tau)^{-\frac{n(p-1)}{2(\sigma-\sigma_1)}(\frac{1}{m}-\frac{1}{q(p-1)})+\frac{p\sigma_1}{\sigma-\sigma_1}} d\tau\right\}.
\end{align*}
 Now, we will prove the following inequality:
\begin{align*}
    (1+t)^{-\frac{\sigma_1}{\sigma-\sigma_1}}\int_0^t (1+t-\tau)^{-\frac{n}{2(\sigma-\sigma_1)}(\frac{1}{\gamma}-\frac{1}{2})+\frac{\sigma_1}{\sigma-\sigma_1}} (1+\tau)^{-\frac{n(p-1)}{2(\sigma-\sigma_1)}(\frac{1}{m}-\frac{1}{q(p-1)})+ \frac{p\sigma_1}{\sigma-\sigma_1}} d\tau \lesssim 1,
\end{align*}
for all $t > 0$. We divide the left side into two parts
\begin{align*}
    J_1(t):=& (1+t)^{-\frac{\sigma_1}{\sigma-\sigma_1}}\int_0^{t/2} (1+t-\tau)^{-\frac{n}{2(\sigma-\sigma_1)}(\frac{1}{\gamma}-\frac{1}{2})+\frac{\sigma_1}{\sigma-\sigma_1}} (1+\tau)^{-\frac{n(p-1)}{2(\sigma-\sigma_1)}(\frac{1}{m}-\frac{1}{q(p-1)})+\frac{p\sigma_1}{\sigma-\sigma_1}} d\tau,\\
    J_2(t):=& (1+t)^{-\frac{\sigma_1}{\sigma-\sigma_1}}\int_{t/2}^t (1+t-\tau)^{-\frac{n}{2(\sigma-\sigma_1)}(\frac{1}{\gamma}-\frac{1}{2})+\frac{\sigma_1}{\sigma-\sigma_1}} (1+\tau)^{-\frac{n(p-1)}{2(\sigma-\sigma_1)}(\frac{1}{m}-\frac{1}{q(p-1)})+\frac{p\sigma_1}{\sigma-\sigma_1}} d\tau.
\end{align*}
The term $J_1(t)$ is estimated as
\begin{align*}
    J_1(t) \lesssim (1+t)^{-\frac{n}{2(\sigma-\sigma_1)}(\frac{1}{\gamma}-\frac{1}{2})}\int_0^{t/2} (1+\tau)^{-\frac{n(p-1)}{2(\sigma-\sigma_1)}(\frac{1}{m}-\frac{1}{q(p-1)})+ \frac{p\sigma_1}{\sigma-\sigma_1}} d\tau.
\end{align*}
We see that if
$$-\frac{n(p-1)}{2(\sigma-\sigma_1)}\left(\frac{1}{m}-\frac{1}{q(p-1)}\right) + \frac{p\sigma_1}{\sigma-\sigma_1}\leq -1, $$
then $ J_1(t) \lesssim 1$ due to $\gamma \in (1,2)$. Otherwise, we also compute 
\begin{align*}
    J_1(t) \lesssim (1+t)^{1-\frac{n}{2(\sigma-\sigma_1)}(\frac{p-1}{m}-\frac{1}{q}+\frac{1}{\gamma}-\frac{1}{2})+\frac{p\sigma_1}{\sigma-\sigma_1}}= (1+t)^{1-\frac{n}{2m(\sigma-\sigma_1)}(p-1)+\frac{p\sigma_1}{\sigma-\sigma_1}}\lesssim 1,
\end{align*}
where we note that the condition 
\begin{align*}
    p \geq 1+ \frac{2m\sigma}{n-2m\sigma_1} &\text{ implies }  1-\frac{n}{2m(\sigma-\sigma_1)}(p-1) +\frac{p\sigma_1}{\sigma-\sigma_1} \leq 0.
\end{align*}
Next, the term $J_2(t)$ is controlled by
\begin{align*}
    J_2(t) &\lesssim (1+t)^{(p-1)(-\frac{n}{2(\sigma-\sigma_1)}(\frac{1}{m}-\frac{1}{q(p-1)})+\frac{\sigma_1}{\sigma-\sigma_1})} \int_{t/2}^{t} (1+t-\tau)^{-\frac{n}{2(\sigma-\sigma_1)}(\frac{1}{\gamma}-\frac{1}{2})+\frac{\sigma_1}{\sigma-\sigma_1}}d\tau,
\end{align*}
provided that it holds
\begin{align*}
&-\frac{n}{2(\sigma-\sigma_1)}\left(\frac{1}{m}-\frac{1}{q(p-1)}\right)+\frac{\sigma_1}{\sigma-\sigma_1}  \\
&\qquad = \left(-\frac{n}{2(\sigma-\sigma_1)} \left(\frac{1}{m}-\frac{1}{2}\right)+ \frac{\sigma_1}{\sigma-\sigma_1}\right) -\frac{n}{2(\sigma-\sigma_1)}\left(\frac{1}{2}-\frac{1}{q(p-1)}\right)< 0. 
\end{align*}
For this reason, carrying out some same steps as we did the term $J_1(t)$, we derive $J_2(t) \lesssim 1$. From this, the estimate (\ref{estimate1.3.2}) has been proved. Combining the estimates (\ref{estimate1.3.1}), 
 (\ref{estimate1.3.2}) and $\varepsilon$ is small enough, we conclude that $\mathcal{N}$ becomes a contraction mapping on $X(T, M\varepsilon)$ with respect to the metric of $Z(T)$. Then, using Banach’s fixed point theorem,  we derive a unique solution $u \in X(T,M\varepsilon)$. Since $T$ is arbitrary, it leads to our desired conclusion. Hence, the proof of Theorem \ref{Global-Existance} is completed.

\section{Blow-up results and Lifespan estimates}\label{Proof of blow-up results}
\subsection{Preliminaries}

In this section, our main aim is to prove Theorem \ref{Blow-up results}. To do this, we recall the definition of weak solution to (\ref{Main.Eq.1}).
\begin{definition} \label{defweaksolution_1}
Let $p>1$,  $T>0$ and $u_0 \equiv 0$. We say that $u \in \mathcal{C}([0,T), L^2)$ is a local weak solution to (\ref{Main.Eq.1}) if for any test functions $ \phi(t,x): = \eta(t)\varphi(x) $ with $\eta  \in \mathcal{C}_0^{\infty}[0,T)$, and for any $\varphi \in \mathcal{C}^{\infty}(\mathbb{R}^n)$ with all derivatives in $L^1 \cap L^{\infty}$, it holds
\begin{align}
&\int_0^T \int_{\R^n}|u(t,x)|^p \phi(t,x)dxdt \nonumber + \varepsilon \int_{\R^n} u_1(x) \phi(0,x)dx \nonumber \\
&\qquad= \int_0^T \int_{\R^n}u(t,x)\bigg(\partial_t^2 - \partial_t (-\Delta)^{\sigma_1}- \partial_t (-\Delta)^{\sigma_2} +(-\Delta)^{\sigma} \bigg)\phi(t,x)dxdt.\notag
\end{align}
If $T= \ity$, we say that $u$ is a global weak solution to \eqref{Main.Eq.1}. 
\end{definition}
\vspace{0.3cm}
Next, let us recall several known ingredients which will be used in the proof of Theorem \ref{Blow-up results}.

\begin{lemma}[see Corollary 3.1 in \cite{DAbbiccoFujiwara2021}]\label{Lemma2.1}
    Let $f(x)= \langle x \rangle^{-q}$ for $q > n$ and $\theta > 0$. Then, it holds for any $x \in \mathbb{R}^n$
    \begin{align*}
       |(-\Delta)^{\theta} f(x)| \lesssim 
        \begin{cases}
            \langle x \rangle^{-q-2\theta} &\text{ if } \theta \text{ is an integer},\\
            \langle x \rangle^{-n-2s} &\text{ if otherwise, where } s = \theta - [\theta]. 
        \end{cases}
    \end{align*}
\end{lemma}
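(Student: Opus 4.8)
The plan is to treat the integer and non-integer cases separately, reducing the second to the first. When $\theta \in \N$, the operator $(-\Delta)^\theta$ is a constant-coefficient differential operator of order $2\theta$, so $(-\Delta)^\theta f$ is a finite linear combination of derivatives $\partial^\alpha f$ with $|\alpha| = 2\theta$. Hence it suffices to show $|\partial^\alpha \langle x\rangle^{-q}| \lesssim \langle x\rangle^{-q-|\alpha|}$, which I would prove by induction on $|\alpha|$: differentiating $\langle x\rangle^{-q} = (1+|x|^2)^{-q/2}$ lowers the power by one unit of $\langle x\rangle$ each time, since $\partial_i\langle x\rangle^{-q} = -q\,x_i\langle x\rangle^{-q-2}$ and $|x_i| \lesssim \langle x\rangle$. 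Taking $|\alpha| = 2\theta$ yields the stated bound $\langle x\rangle^{-q-2\theta}$. This same computation also gives, for later use, $|\partial^\alpha(-\Delta)^k f(x)| \lesssim \langle x\rangle^{-q-2k-|\alpha|}$ for any $k \in \N$ and multi-index $\alpha$.

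For non-integer $\theta$, write $k=[\theta]$ and $s = \theta - k \in (0,1)$, and use the factorization $(-\Delta)^\theta = (-\Delta)^s(-\Delta)^k$ coming from the Fourier symbol $|\xi|^{2\theta}=|\xi|^{2s}|\xi|^{2k}$. Set $g := (-\Delta)^k f$; by the integer case $g$ is smooth with $|\partial^\alpha g(x)| \lesssim \langle x\rangle^{-r-|\alpha|}$, where $r := q+2k > n$, so in particular $g \in L^1$. I would then invoke the singular-integral representation
$$(-\Delta)^\theta f(x) = (-\Delta)^s g(x) = c_{n,s}\,\mathrm{p.v.}\!\int_{\R^n}\frac{g(x)-g(y)}{|x-y|^{n+2s}}\,dy,$$
and estimate the right-hand side for $|x|\ge 1$ (for $|x|\le 1$ the bound is trivial since $\langle x\rangle^{-n-2s}\sim 1$ and $(-\Delta)^\theta f$ is bounded there).

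The core of the argument is to split $\R^n$ into a near region $N=\{|y-x|\le |x|/2\}$, a far-inner region $A=\{|y|\le|x|/2\}$, and the remaining far-outer region $B$. On $N$ I would symmetrize the principal value to rewrite the integrand in terms of the second difference $g(x+z)+g(x-z)-2g(x)$, bound this by $|z|^2\sup_{B(x,|x|/2)}|D^2 g| \lesssim |z|^2\langle x\rangle^{-r-2}$, and integrate against $|z|^{-n-2s}$ to obtain a contribution $\lesssim \langle x\rangle^{-r-2}|x|^{2-2s} \lesssim \langle x\rangle^{-r-2s}$. On $A$ and $B$ one has $|x-y|\ge |x|/2$: the terms carrying $g(x)$ contribute $\lesssim g(x)|x|^{-2s}\lesssim\langle x\rangle^{-r-2s}$, while $\int_A g(y)|x-y|^{-n-2s}\,dy \lesssim |x|^{-n-2s}\|g\|_{L^1}$ furnishes the dominant decay $\langle x\rangle^{-n-2s}$, and on $B$ one uses $g(y)\lesssim\langle x\rangle^{-r}$ together with $\int_{|y|>|x|/2}\langle y\rangle^{-r}\,dy\sim |x|^{n-r}$ to get $\lesssim |x|^{-r-2s}$. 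Since $r>n$ forces $\langle x\rangle^{-r-2s}\le \langle x\rangle^{-n-2s}$, every piece is controlled by $\langle x\rangle^{-n-2s}$, as claimed.

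I expect the main obstacle to be the near-region estimate: one must exploit the principal-value cancellation (the odd part of $g$ about $x$ integrates to zero) through the second-order difference, and this requires the uniform bound on $D^2 g$ over the whole ball $B(x,|x|/2)$, which is exactly what the integer-case derivative estimates supply. The other subtlety worth flagging is conceptual rather than technical: it is the non-local tail, not the pointwise decay of $g$, that dictates the final rate $-n-2s$. The decay of $f$ only needs to be fast enough, namely $q>n$, to guarantee $\|g\|_{L^1}<\infty$, and any faster decay does not improve the exponent.
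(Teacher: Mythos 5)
Your proof is correct, but note that the paper itself does not prove this lemma at all: it is imported verbatim as Corollary 3.1 of the cited work of D'Abbicco and Fujiwara, so there is no in-paper argument to compare against. What you have written is a legitimate self-contained derivation along the standard lines of that reference. The integer case via $|\partial^\alpha \langle x\rangle^{-q}|\lesssim \langle x\rangle^{-q-|\alpha|}$ is routine, and the non-integer case correctly exploits the factorization $(-\Delta)^{\theta}=(-\Delta)^{s}(-\Delta)^{k}$ together with the principal-value kernel for $(-\Delta)^s$ with $s\in(0,1)$. Your three-region split is the right one, and each estimate checks out: the second-difference bound on $B(x,|x|/2)$ uses exactly the derivative decay $|D^2 g|\lesssim\langle x\rangle^{-q-2k-2}$ supplied by the integer case, the $g(x)$-terms over $\{|x-y|\ge|x|/2\}$ give $\langle x\rangle^{-q-2k-2s}$, and the $L^1$-tail over $\{|y|\le|x|/2\}$ produces the dominant and non-improvable rate $\langle x\rangle^{-n-2s}$, with $q>n$ ensuring all other contributions are subordinate. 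Your closing observation that the exponent $-n-2s$ is dictated by the non-local tail rather than by the pointwise decay of $f$ is precisely why the non-integer bound in the statement is independent of $q$ and of $[\theta]$, which is the phenomenon the lemma is recording. Two small points you could make explicit if writing this up in full: the justification that the symbol factorization is legitimate on $f=\langle x\rangle^{-q}$ (it is, since $f$ and all its derivatives lie in $L^1\cap L^2$), and the symmetry of the ball $B(x,|x|/2)$ about $x$, which is what permits the second-difference rewriting of the principal value on the near region.
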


\begin{lemma}[see Lemma 4 in \cite{DaoReissig2021}] \label{Lemma4.2}
Let $s\in (0,1)$. Assume that $\psi$ is a smooth function satisfying $\partial_x^2 \psi\in L^\ity$. For any $R>0$, let $\psi_R$ be a function defined by
$$ \psi_R(x)= \psi\big(R^{-1} x\big)\quad \text{ for all }x \in \R^n. $$
Then, $(-\Delta)^s (\psi_R)$ enjoys the following scaling property for all $x \in \R^n$:
$$(-\Delta)^s (\psi_R)(x)= R^{-2s}\big((-\Delta)^s \psi \big)\big(R^{-1} x\big). $$
\end{lemma}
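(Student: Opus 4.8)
The plan is to prove the scaling identity directly from the singular-integral representation of the fractional Laplacian, since the hypotheses on $\psi$ are precisely those under which this representation is pointwise well-defined. For $s \in (0,1)$ I would start from the symmetric second-difference form
$$(-\Delta)^s \phi(x) = \frac{c_{n,s}}{2} \intRn \frac{2\phi(x) - \phi(x+y) - \phi(x-y)}{|y|^{n+2s}}\,dy,$$
where $c_{n,s}>0$ is the usual normalising constant. This is the right representation to invoke here: near $y=0$ the numerator is $O(|y|^2 \|\partial_x^2 \phi\|_{L^\infty})$ by a second-order Taylor expansion, so the integrand is integrable against $|y|^{-n-2s}$ for $s<1$, while boundedness of $\phi$ away from the origin handles the tail. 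Thus the hypothesis $\partial_x^2 \psi \in L^\infty$ is exactly what guarantees that both $(-\Delta)^s \psi$ and $(-\Delta)^s \psi_R$ exist as absolutely convergent integrals, and the symmetric form avoids any delicate principal-value cancellation.

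The core of the argument is a single change of variables. Substituting $\psi_R(x) = \psi(R^{-1}x)$ into the representation gives
$$(-\Delta)^s \psi_R(x) = \frac{c_{n,s}}{2} \intRn \frac{2\psi(R^{-1}x) - \psi(R^{-1}x + R^{-1}y) - \psi(R^{-1}x - R^{-1}y)}{|y|^{n+2s}}\,dy.$$
Setting $w = R^{-1}y$, so that $dy = R^n\,dw$ and $|y| = R|w|$, the denominator contributes a factor $R^{n+2s}$ while the Jacobian contributes $R^n$, leaving the net factor $R^{-2s}$ out front. The remaining integral is
$$\frac{c_{n,s}}{2} \intRn \frac{2\psi(R^{-1}x) - \psi(R^{-1}x + w) - \psi(R^{-1}x - w)}{|w|^{n+2s}}\,dw,$$
which is exactly $(-\Delta)^s \psi$ evaluated at the point $R^{-1}x$. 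Combining the two displays yields $(-\Delta)^s (\psi_R)(x) = R^{-2s}\big((-\Delta)^s \psi\big)(R^{-1}x)$, as claimed.

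I do not expect a serious obstacle: the only point requiring care is the justification that the change of variables may be performed under the integral sign, and this is immediate once absolute convergence is established via the Taylor-expansion bound above, so no dominated-convergence or principal-value subtleties arise. An alternative route would use the Fourier-multiplier definition $(-\Delta)^s \phi = \mathcal{F}^{-1}(|\xi|^{2s}\hat\phi)$ together with the elementary scaling $\widehat{\psi_R}(\xi) = R^n \hat\psi(R\xi)$, which reproduces the factor $R^{-2s}$ just as quickly; however, that approach would first require checking that the multiplier and integral definitions agree on the class of functions considered here, so the integral computation above is the more self-contained choice and dovetails with the use of this lemma alongside Lemma \ref{Lemma2.1} in the blow-up argument.
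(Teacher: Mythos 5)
The paper does not prove this lemma at all: it is imported verbatim as Lemma 4 of \cite{DaoReissig2021}, so there is no internal argument to compare yours against. Your proof via the symmetric second-difference representation and the substitution $w=R^{-1}y$ is the standard one and is essentially correct: the Jacobian contributes $R^{n}$ against the factor $R^{n+2s}$ from the kernel, leaving exactly $R^{-2s}$, and the remaining integral is $(-\Delta)^{s}\psi$ evaluated at $R^{-1}x$. One small imprecision is worth flagging. The hypothesis $\partial_x^2\psi\in L^\ity$ controls the integrand only near $y=0$, where the second difference is $O(|y|^{2}\|\partial_x^2\psi\|_{L^\ity})$; it does not imply that $\psi$ itself is bounded (take $\psi(x)=x_1$), so your appeal to ``boundedness of $\phi$ away from the origin'' for the tail is an extra assumption not literally contained in the statement. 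For the pointwise identity to make sense one needs in addition something like $\psi\in L^\ity$ or the weighted integrability $\int_{\R^n}|\psi(y)|(1+|y|)^{-n-2s}\,\rmd y<\ity$. This is harmless in context, since the only function to which the lemma is applied in the blow-up argument is $\varphi(x)=\langle x\rangle^{-n-\epsilon}$, which is bounded, and the same tacit assumption is present in the cited source; your alternative Fourier-multiplier route would equally apply on that class. So the proposal is a valid, self-contained substitute for the external citation, up to that one hypothesis you should state explicitly.
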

 
\subsection{Proof of Theorem \ref{Blow-up results}}
At first, let us introduce the test functions $\eta= \eta(t)$ and $\varphi=\varphi(x)$ satisfying the following properties:
\begin{align*}
&1.\quad \eta \in \mathcal{C}_0^\ity([0,\ity)) \text{ and }
\eta(t)=
\begin{cases}
1 &\text{ if }0 \le t \le 1/2, \\
\text{decreasing} &\text{ if }1/2\le t\le 1, \\
0 &\text{ if }t \ge 1,
\end{cases} & \nonumber \\
&2.\quad \eta^{-\frac{p'}{p}}\big(|\eta'|^{p'}+|\eta''|^{p'}\big) \text{ is bounded, } & 
\end{align*}
where $p'$ stands for the conjugate of $p$. In addition, for $\epsilon$ is a sufficiently small positive constant, we define
\begin{align*}
    \varphi(x) := \langle x \rangle^{-n-\epsilon} = (|x|^2 +1) ^{-\frac{n+\epsilon}{2}}.
\end{align*}
 Let $R \geq 1$ and $K \geq 1$. We define the following test function:
$$ \Phi_R(t,x):= \eta_R(t) \varphi_R(x), $$
where $\eta_R(t):= \eta(R^{-\alpha}t)$, $\alpha > 0$ and $\varphi_R(x):= \varphi(R^{-1}K^{-1}x)$. Moreover, we define the functionals
\begin{align*}
 \mathcal{I}_R := \int_0^{\ity}\int_{\R^n}|u(t,x)|^p \Phi_R(t,x) dxdt= \int_{0}^{R^{\alpha}}\int_{\mathbb{R}^n}|u(t,x)|^p \Phi_R(t,x) dxdt 
 \end{align*}
 and
 \begin{align*}
     \tilde{\mathcal{I}}_R := \int_{R^\alpha/2}^{R^\alpha} \int_{\mathbb{R}^n} |u(t,x)|^p \Phi_R(t,x) dxdt.
 \end{align*}
Let us assume that $u= u(t,x)$ is a global weak solution from $\mathcal{C}([0, \infty), L^2)$ to (\ref{Main.Eq.1}) in the sense of Definition \ref{defweaksolution_1}. We replace the function $\phi(t,x)$ in Definition \ref{defweaksolution_1} by $\Phi_R(t,x)$ to derive
\begin{align}
    &\int_0^{\infty}\int_{\mathbb{R}^n} |u(t,x)|^p \Phi_R(t, x) dxdt +\varepsilon\int_{\mathbb{R}^n} u_1(x)\varphi_R(x)dx\notag\\
    &\qquad\quad= \int_0^{\infty}\int_{\mathbb{R}^n} u(t,x) \left(\partial_t^2 -\partial_t (-\Delta)^{\sigma_1}-\partial_t (-\Delta)^{\sigma_2}+(-\Delta)^{\sigma}\right)\Phi_R(t,x) dxdt.
    \label{equation1.5.1}
\end{align} 
 Using H\"older's inequality with $1/p + 1/p' = 1$, Lemmas \ref{Lemma2.1}-\ref{Lemma4.2} and the change of variables $\Tilde{t} = R^{-\alpha} t$, $\Tilde{x} = R^{-\alpha} K^{-1} x$,  we obtain the following chain of estimates for all $R \geq 1$ and $K \geq 1$:
\begin{align*}
    &\int_0^{\infty} \int_{\mathbb{R}^n} |u(t,x) \partial_t^2 \Phi_R(t,x)|dxdt \\
    &\hspace{1cm}= \int_{R^\alpha/2}^{R^\alpha} \int_{\mathbb{R}^n}\left|u(t,x) \partial_t^2 \Phi_R(t,x) \right|dxdt\\
    &\hspace{1cm}\lesssim \left(\int_{R^{\alpha}/2}^{R^\alpha} \int_{\mathbb{R}^n} |u(t,x)|^p \Phi_R(t,x) dxdt\right)^{\frac{1}{p}} \left(\int_{R^{\alpha}/2}^{R^\alpha} \int_{\mathbb{R}^n} |\partial_t^2 \Phi_R(t,x)|^{p'} \Phi_R^{-\frac{p'}{p}}(t,x) dxdt\right)^{\frac{1}{p'}}\\
    &\hspace{1cm}\lesssim \tilde{\mathcal{I}}_R^{\frac{1}{p}} R^{-2\alpha + \frac{n+\alpha}{p'}} K^{\frac{n}{p'}},\\
    &\int_0^{\infty} \int_{\mathbb{R}^n}\left|u(t,x) \partial_t (-\Delta)^{\sigma_1} \Phi_R(t,x)\right| dxdt\\
    &\hspace{1cm}= \int_{R^{\alpha}/2}^{R^\alpha} \int_{\mathbb{R}^n} \left|u(t,x) \partial_t (-\Delta)^{\sigma_1} \Phi_R(t,x)\right| dxdt\\
    &\hspace{1cm}\lesssim \left(\int_{R^{\alpha}/2}^{R^\alpha} \int_{\mathbb{R}^n} |u(t,x)|^p \Phi_R(t,x) dxdt\right)^{\frac{1}{p}} \\
    &\hspace{3cm} \times \left(\int_{R^{\alpha}/2}^{R^\alpha} \int_{\mathbb{R}^n} |\partial_t (-\Delta)^{\sigma_1} \Phi_R(t,x)|^{p'} \Phi_R^{-\frac{p'}{p}}(t,x) dxdt\right)^{\frac{1}{p'}}\\
    &\hspace{1cm}\lesssim \tilde{\mathcal{I}}_R^{\frac{1}{p}} R^{-\alpha-2\sigma_1 +\frac{n+\alpha}{p'}} K^{-2\sigma_1+\frac{n}{p'}},\\
    &\int_0^{\infty} \int_{\mathbb{R}^n}\left|u(t,x) \partial_t (-\Delta)^{\sigma_2} \Phi_R(t,x)\right| dxdt\\
    &\hspace{1cm}= \int_{R^{\alpha}/2}^{R^\alpha} \int_{\mathbb{R}^n} \left|u(t,x) \partial_t (-\Delta)^{\sigma_2} \Phi_R(t,x)\right| dxdt\\
    &\hspace{1cm}\lesssim \left(\int_{R^{\alpha}/2}^{R^\alpha} \int_{\mathbb{R}^n} |u(t,x)|^p \Phi_R(t,x) dxdt\right)^{\frac{1}{p}} \\
    &\hspace{3cm} \times \left(\int_{R^{\alpha}/2}^{R^\alpha} \int_{\mathbb{R}^n} |\partial_t (-\Delta)^{\sigma_2} \Phi_R(t,x)|^{p'} \Phi_R^{-\frac{p'}{p}}(t,x) dxdt\right)^{\frac{1}{p'}}\\
    &\hspace{1cm}\lesssim \tilde{\mathcal{I}}_R^{\frac{1}{p}} R^{-\alpha-2\sigma_2 +\frac{n+\alpha}{p'}} K^{-2\sigma_2 +\frac{n}{p'}}
\end{align*}
and
\begin{align*}
    &\int_0^{\infty} \int_{\mathbb{R}^n}\left|u(t,x) (-\Delta)^{\sigma} \Phi_R(t,x)\right| dxdt\\
    &\hspace{1cm} = \int_0^{R^\alpha}  \int_{\mathbb{R}^n}\left|u(t,x) (-\Delta)^{\sigma} \Phi_R(t,x)\right| dxdt\\
    &\hspace{1cm} \lesssim \left(\int_0^{R^\alpha} \int_{\mathbb{R}^n} |u(t,x)|^p \Phi_R(t,x) dxdt\right)^{\frac{1}{p}} \left(\int_0^{R^\alpha}\int_{\mathbb{R}^n} \left|(-\Delta)^{\sigma} \Phi_R(t,x)\right|^{p'} \Phi_R^{-\frac{p'}{p}}(t,x)dxdt\right)^{\frac{1}{p'}}\\
    &\hspace{1cm} \lesssim \mathcal{I}_R^{\frac{1}{p}} R^{-2\sigma+\frac{n+\alpha}{p'}} K^{-2\sigma +\frac{n}{p'}}.
\end{align*}
Let us fix $\alpha :=2(\sigma-\sigma_1)$. Combining this with the relations (\ref{equation1.5.1}) we conclude
\begin{align*}
    \mathcal{I}_R + \varepsilon\int_{\mathbb{R}^n} u_1(x) \varphi_{R}(x) dx \lesssim \mathcal{I}_R^{\frac{1}{p}} R^{-2\sigma+\frac{n+2(\sigma-\sigma_1)}{p'}} K^{\frac{n}{p'}}.
\end{align*}
  Moreover, the application of the inequality 
\begin{equation*}
    Ac^{\gamma}-c \leq A^{\frac{1}{1-\gamma}} \text{ for all } A > 0, c \geq 0 \text{ and } 0 < \gamma < 1
\end{equation*}
leads to
\begin{equation}\label{equation1.5.7}
    0 < \varepsilon\int_{\mathbb{R}^n}u_1(x) \varphi_R(x)dx \lesssim R^{-2\sigma p' + n+ 2(\sigma-\sigma_1)} K^n
\end{equation}
for all $R > R_0$. Let us now divide our consideration into the following two cases. \medskip

\begin{itemize}[leftmargin=*]
    \item \textbf{Case 1:} If $m = 1$, then we note that the assumption (\ref{condition1.1.2}) is equivalent to
    $$-2\sigma p' + n + 2(\sigma-\sigma_1) \leq 0. $$
    In the subcritical case, i.e.  $-2\sigma p' + n + 2(\sigma-\sigma_1) < 0$, taking $K= 1$ and letting $R \to \infty$ in (\ref{equation1.5.7}) we obtain
\begin{equation*}
    \int_{\mathbb{R}^n} u_1(x) dx = 0.
\end{equation*}
So, this is a contradiction to the assumption (\ref{condition1.1.1}). In the critical case, i.e. $-2\sigma p'+ n+ 2(\sigma-\sigma_1) = 0$, we see that $\mathcal{I}_R$  is uniformly bounded, that is, $u \in L^p([0, \infty) \times \mathbb{R}^n)$. Consequently, one gets
\begin{align*}
    \lim_{R\to\infty} \int_0^{\infty}\int_{\mathbb{R}^n} u(t,x) \left(\partial_t^2 -\partial_t (-\Delta)^{\sigma_1}-\partial_t (-\Delta)^{\sigma_2}\right)\Phi_R(t,x) dxdt = 0
\end{align*}
by noticing that $\dps\lim_{R\to\infty} \tilde{\mathcal{I}}_R = 0$. The employment of Young's inequality implies
\begin{align*}
    \mathcal{I}_R + \varepsilon\int_{\mathbb{R}^n} u_1(x) \varphi_R(x) dx \leq C' \mathcal{I}_R^{\frac{1}{p}} K^{-2\sigma+\frac{n}{p'}} \leq \frac{\mathcal{I}_R}{p} + C K^{n-2\sigma p'},
\end{align*}
that is,
\begin{align*}
    \lim_{R\to\infty} \mathcal{I}_R \leq -p' \varepsilon\int_{\mathbb{R}^n} u_1(x) dx + p'C K^{n-2\sigma p'}.
\end{align*}
Observing that $n-2\sigma p' < 0$ we pass $K \to \infty$ to derive a contradiction to the assumption (\ref{condition1.1.1}) again.
\item \textbf{Case 2:} If $m> 1$, then we use (\ref{equation1.5.7}) again by taking $K= 1$ and noticing the condition (\ref{condition1.1.1}) to give
\begin{align*}
    \int_{\mathbb{R}^n} u_1(x)\varphi_R(x)dx &\geq \int_{R/2 \leq |x| \leq R} |x|^{-\frac{n}{m}} (\log(e+|x|))^{-1}  \varphi_R(x)dx \\
    &\geq C \int_{R/2}^R |x|^{n-\frac{n}{m}-1} (\log(|x|))^{-1} d|x|\\
    &\geq C R^{n-\frac{n}{m}} (\log R)^{-1},
\end{align*}
which leads to
\begin{align}
   \varepsilon (\log R)^{-1} \lesssim R^{-2\sigma p'+\frac{n}{m}+2(\sigma-\sigma_1)}. \label{estimate2.3.100}
\end{align}
However, the fact is that the last estimate does not hold for any sufficiently large number $R$ when the following condition occurs:
$$-2\sigma p' +\frac{n}{m} +2(\sigma-\sigma_1) < 0. $$
\end{itemize} 
Summarizing, the proof of Theorem \ref{Blow-up results} is completed.

\subsection{Estimates for lifespan}
In this subsection, we will summarize how to get some estimates for lifespan of
solutions in the subcritical case
\begin{align}
    p < 1+ \frac{2m\sigma}{n-2m\sigma_1}. \label{subcritical}
\end{align}
Let us denote by $T_m(\varepsilon)$ the so-called lifespan of a local (in time) solution, i.e. the maximal existence time of local solutions. Then, lower bound estimates and upper bound estimates for $T_m(\varepsilon)$ are given by the next statements. \medskip

\begin{proposition}[\textbf{Lower bound of lifespan}]\label{lower-bound}
    Under the conditions of Theorem \ref{Local-existence}, we impose the additional assumption (\ref{subcritical}). Then, there exists a constant $\varepsilon_0 > 0$ such that for any $\varepsilon \in (0, \varepsilon_0]$, the lower bound for the lifespan $T_m(\varepsilon)$ can be estimated as follows:
    \begin{align}\label{Lower_Lifespan}
        T_m(\varepsilon) \gtrsim \varepsilon^{-\frac{p-1}{\gamma_m}},
    \end{align}
    where
    \begin{align}
        \gamma_m := 1-\frac{n}{2m(\sigma-\sigma_1)}(p-1) + \frac{p\sigma_1}{\sigma-\sigma_1}. \label{defi5.1.1}
    \end{align}
\end{proposition}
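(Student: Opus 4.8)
The plan is to re-run the Banach fixed-point scheme from the proofs of Theorems \ref{Local-existence} and \ref{Global-Existance}, but now keeping explicit track of the $T$-dependence generated by the \emph{subcritical} branch of Proposition \ref{lemma3.1}. As in \eqref{Constant1}, I would fix $M := 2C_1\|(u_0,u_1)\|_{\mathcal{D}_{\sigma}^m}$ with $C_1$ the $T$-independent constant coming from the linear estimate, and study the map $\mathcal{N}$ defined in \eqref{map1} on the closed ball $X(T,M\varepsilon)$ equipped with the metric of $Z(T)$. The crucial observation is that the subcriticality assumption \eqref{subcritical} is exactly equivalent to $\gamma_m > 0$, with $\gamma_m$ as in \eqref{defi5.1.1}, so that the time weights that were harmless in the global case now produce a genuine, growing power of $(1+T)$.

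For the self-mapping estimate I would combine the subcritical line of \eqref{estimate3.1.1} with Proposition \ref{lemma3.2}. This gives, for $u \in X(T,M\varepsilon)$,
\[
\left\| \int_0^t K_1(t-\tau,\cdot)*_x |u(\tau,\cdot)|^p\, d\tau\right\|_{X(T)} \lesssim (1+T)^{\gamma_m}\,\big\||u|^p\big\|_{Y(T)} \lesssim (1+T)^{\gamma_m}\,\|u\|_{X(T)}^p,
\]
whence
\[
\|\mathcal{N}[u]\|_{X(T)} \le \frac{M\varepsilon}{2} + C_2\,(1+T)^{\gamma_m} M^p \varepsilon^p.
\]
For the contraction property I would repeat verbatim the computation leading to \eqref{estimate1.3.2}; the only difference is that the controlling integrals $J_1,J_2$ there were bounded using $\gamma_m \le 0$, whereas in the present regime $\gamma_m > 0$ forces the bound $(1+T)^{\gamma_m}$ instead of $1$, yielding
\[
\|\mathcal{N}[u]-\mathcal{N}[v]\|_{Z(T)} \le C_3\,(1+T)^{\gamma_m} M^{p-1}\varepsilon^{p-1}\,\|u-v\|_{Z(T)}.
\]

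Both the self-mapping and contraction conditions then reduce to the single requirement
\[
C\,(1+T)^{\gamma_m} M^{p-1}\varepsilon^{p-1} \le \tfrac12,
\]
i.e. $1+T \lesssim \varepsilon^{-(p-1)/\gamma_m}$. Choosing $T := c\,\varepsilon^{-(p-1)/\gamma_m}$ with $c$ small enough (depending on $C,M$ but not on $\varepsilon$) makes $\mathcal{N}$ a contraction on $X(T,M\varepsilon)$, so Banach's fixed-point theorem produces a unique solution on $[0,T)$. Since the maximal existence time $T_m(\varepsilon)$ is at least this $T$, the lower bound \eqref{Lower_Lifespan} follows.

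The main obstacle is bookkeeping rather than any new idea: I must confirm that $C_1$ (hence $M$) is genuinely independent of $T$, and that the contraction integrals grow precisely like $(1+T)^{\gamma_m}$ and not some other power, so that the self-map and contraction thresholds coincide and yield the \emph{same} exponent $(p-1)/\gamma_m$. The positivity $\gamma_m > 0$, equivalent to \eqref{subcritical}, is what converts this controlled $T$-growth into a finite existence time that diverges as $\varepsilon \to 0$; matching the exponent exactly is the one computation that must be carried out without slack.
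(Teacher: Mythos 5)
Your proposal is correct and follows essentially the same route as the paper: both re-run the fixed-point scheme on $X(T,M\varepsilon)$ with the metric of $Z(T)$, invoke the subcritical branch of Proposition \ref{lemma3.1} together with Proposition \ref{lemma3.2} to produce the factor $(1+T)^{\gamma_m}$ in both the self-map and contraction estimates, and read off $1+T\lesssim \varepsilon^{-(p-1)/\gamma_m}$ from the resulting smallness threshold. The only cosmetic difference is that the paper closes with a continuity argument on $T_m^{*}:=\sup\{T<T_m(\varepsilon)\,:\,\|u\|_{X(T)}\le M\varepsilon\}$ instead of directly fixing $T=c\,\varepsilon^{-(p-1)/\gamma_m}$ and applying Banach's fixed-point theorem once, but the conclusion and the exponent are identical.
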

\begin{proof}
To prove Proposition \ref{lower-bound}, we will again use the definitions of the constant $M$ and the mapping $\mathcal{N}$ on the set $X(T, M \varepsilon)$ from the proof of Theorem \ref{Local-existence}, in particular, the estimates (\ref{Constant1}) and (\ref{map1}). From Proposition \ref{lemma3.1} and the condition (\ref{subcritical}), we can see that
\begin{align*}
   \left\|\int_0^t K_1(t-\tau, x) \ast_{x} |u(\tau,x)|^p d\tau\right\|_{X(T)} &\lesssim (1+T)^{1-\frac{n}{2m(\sigma-\sigma_1)}(p-1)+\frac{p\sigma_1}{\sigma-\sigma_1}} \||u|^p\|_{Y(T)} \\
   &\lesssim (1+T)^{1-\frac{n}{2m(\sigma-\sigma_1)}(p-1)+\frac{p\sigma_1}{\sigma-\sigma_1}} \|u\|_{X(T)}^p.
\end{align*}
For this reason, with two function $u, v$ belonging to $X(T, M\varepsilon)$ we arrive at
\begin{align*}
    \|\mathcal{N}[u]\|_{X(T)} \leq \frac{M\varepsilon}{2} + C_1 (1+T)^{1-\frac{n}{2m(\sigma-\sigma_1)}(p-1)+\frac{p\sigma_1}{\sigma-\sigma_1}} M^p \varepsilon^p
\end{align*}
    and
    \begin{align*}
        \|\mathcal{N}[u]-\mathcal{N}[v]\|_{Z(T)} \leq C_2 (1+T)^{1-\frac{n}{2m(\sigma-\sigma_1)}(p-1)+\frac{p\sigma_1}{\sigma-\sigma_1}} M^{p-1} \varepsilon^{p-1} \|u-v\|_{Z(T)},
    \end{align*}
    where constants $C_1$ and $C_2$ independent of $M, \varepsilon$ and $T$. Therefore, if we assume that
    \begin{align}
        \max\{C_1, C_2\} (1+T)^{1-\frac{n}{2m(\sigma-\sigma_1)}(p-1)+\frac{p\sigma_1}{\sigma-\sigma_1}} M^{p-1} \varepsilon^{p-1} < \frac{1}{4}, \label{condition5.1.1}
    \end{align}
    then $\mathcal{N}$ is a contraction mapping on $X(T, M\varepsilon)$ with the metric of $Z(T)$. As a consequence, we may construct a unique local solution $u \in X(T, M\varepsilon)$, moreover, the following estimate holds:
    \begin{align*}
        \|u\|_{X(T)} < \frac{3M\varepsilon}{4}.
    \end{align*}
    Let us choose
    \begin{align*}
        T_m^* := \sup\bigg\{T \in (0, T_m(\varepsilon)) \text{ such that } \mathcal{G}(T) := \|u\|_{X(T)} \leq M\varepsilon\bigg\}.
    \end{align*}
    If $T_m^*$ satisfies the inequality (\ref{condition5.1.1}), then from the previous estimate it follows that $\mathcal{G}(T_m^*) < 3M\varepsilon/4$. Due to the fact that $\mathcal{G}(T)$ is a continuous and increasing function for any $T \in (0, T_m(\varepsilon))$, there exists $T_m^0 \in (T_m^*, T_m(\varepsilon))$ such that $\mathcal{G}(T_m^0) \leq M\varepsilon$. This contradicts to the definition of $T_m^*$. For this reason, one realizes
    \begin{align*}
        \max\{C_1, C_2\} (1+T_m^*)^{1-\frac{n}{2m(\sigma-\sigma_1)}(p-1)+\frac{p\sigma_1}{\sigma-\sigma_1}} M^{p-1} \varepsilon^{p-1} \geq \frac{1}{4},
    \end{align*}
    that is,
    \begin{align*}
        T_m(\varepsilon) \geq T_m^* \gtrsim \varepsilon^{-\frac{p-1}{\gamma_m}}.
    \end{align*}
    Thus, Proposition \ref{lower-bound} has been proved.
\end{proof}

\begin{proposition}[\textbf{Upper bound of lifespan}]\label{upper-bound}
  Assume that the conditions of Theorem \ref{Blow-up results} are satisfied. Then, the upper bound for the lifespan $T_m(\varepsilon)$ can be estimated by
  \begin{align}
      T_m(\varepsilon) &\lesssim
      \begin{cases}
          \varepsilon^{-\frac{p-1}{\gamma_1}} &\text{ if } m = 1,\\
          \varepsilon^{-\frac{p-1}{\gamma_m}-\delta} &\text{ if } m > 1,
      \end{cases} \label{estimate5.2.1}
   \end{align}
   where $\gamma_m$ is defined as in (\ref{defi5.1.1}) and $\delta$ is an arbitrarily small positive real number.
\end{proposition}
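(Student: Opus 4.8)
The plan is to recycle the rescaled test-function machinery from the proof of Theorem~\ref{Blow-up results} without change, exploiting a single feature that was irrelevant there: the test function $\Phi_R(t,x)=\eta_R(t)\varphi_R(x)$ is supported in time on $[0,R^\alpha]$ with $\alpha=2(\sigma-\sigma_1)$. Hence the weak formulation of Definition~\ref{defweaksolution_1}, and with it every estimate leading to the fundamental inequality \eqref{equation1.5.7}, is legitimately available as soon as the local solution exists on $[0,R^\alpha]$, i.e. whenever $R^\alpha<T_m(\varepsilon)$. So instead of sending $R\to\infty$ to reach a contradiction, I would keep $R$ finite, tie it to the lifespan through $R^\alpha\approx T_m(\varepsilon)$, and extract an upper bound for $T_m(\varepsilon)$ directly from \eqref{equation1.5.7} with $K=1$. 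Subcriticality \eqref{subcritical} is precisely what renders the exponent of $R$ on the right-hand side negative, so the inequality forces $R$, and therefore $T_m(\varepsilon)$, to be bounded in terms of $\varepsilon$. The one algebraic fact to be verified at the very end is the identity
\[
2(\sigma-\sigma_1)\,\gamma_m=(p-1)\Big(2\sigma p'-\tfrac{n}{m}-2(\sigma-\sigma_1)\Big),
\]
with $\gamma_m$ as in \eqref{defi5.1.1}, which converts the $R$-power produced by \eqref{equation1.5.7} exactly into the claimed $\varepsilon$-power in \eqref{estimate5.2.1}.

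For $m=1$, the hypothesis $\int_{\R^n}u_1(x)\,dx>0$ from \eqref{condition1.1.1}, together with $\varphi_R\to 1$ pointwise and dominated convergence, yields $\int_{\R^n}u_1(x)\varphi_R(x)\,dx\ge c_0>0$ for all large $R$. Substituting this lower bound into \eqref{equation1.5.7} with $K=1$ gives $\varepsilon\lesssim R^{-\beta_1}$, where $\beta_1:=2\sigma p'-n-2(\sigma-\sigma_1)>0$, the positivity being equivalent to \eqref{subcritical} at $m=1$. Since this holds for every admissible $R<T_1(\varepsilon)^{1/\alpha}$ and $R\mapsto R^{-\beta_1}$ is decreasing, passing to the supremum of such $R$ gives $\varepsilon\lesssim T_1(\varepsilon)^{-\beta_1/\alpha}$, that is $T_1(\varepsilon)\lesssim\varepsilon^{-\alpha/\beta_1}$. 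The displayed identity at $m=1$ reads $\alpha\gamma_1=(p-1)\beta_1$, whence $\alpha/\beta_1=(p-1)/\gamma_1$ and the first line of \eqref{estimate5.2.1} follows.

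For $m>1$, I would instead invoke the polynomial lower bound already obtained inside the proof of Theorem~\ref{Blow-up results}, namely $\int_{\R^n}u_1(x)\varphi_R(x)\,dx\gtrsim R^{\,n-n/m}(\log R)^{-1}$, which combined with \eqref{equation1.5.7} at $K=1$ reproduces \eqref{estimate2.3.100}: $\varepsilon(\log R)^{-1}\lesssim R^{-\beta_m}$ with $\beta_m:=2\sigma p'-\tfrac{n}{m}-2(\sigma-\sigma_1)>0$ under \eqref{subcritical}. The logarithm is the only genuine new difficulty and is the sole reason for the $\delta$-loss in the statement: for an arbitrarily small $\delta'>0$ I bound $\log R\lesssim_{\delta'}R^{\delta'}$, absorb it to get $\varepsilon\lesssim R^{-\beta_m+\delta'}$, and then couple $R^\alpha\approx T_m(\varepsilon)$ as before to obtain $T_m(\varepsilon)\lesssim\varepsilon^{-\alpha/(\beta_m-\delta')}$. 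Because $\alpha/\beta_m=(p-1)/\gamma_m$ by the same identity and $\delta'\mapsto\alpha/(\beta_m-\delta')$ is continuous with value $(p-1)/\gamma_m$ at $\delta'=0$, this exponent can be written as $(p-1)/\gamma_m+\delta$ with $\delta>0$ as small as we like, giving the second line of \eqref{estimate5.2.1}.

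The delicate points I expect are, first, the bookkeeping that makes \eqref{equation1.5.7} genuinely available for a merely local solution: I must confirm that the $\mathcal{C}([0,T),H^\sigma)$ solution of Theorem~\ref{Local-existence} is a weak solution in the sense of Definition~\ref{defweaksolution_1} on its maximal interval, so that $\Phi_R$ with $R^\alpha<T_m(\varepsilon)$ is an admissible test function and all of the H\"older and scaling estimates of the blow-up proof transfer with integrals restricted to $[0,R^\alpha]$. Second, the logarithmic factor in the case $m>1$ cannot be removed by this scheme, so the upper bound is necessarily off by the arbitrarily small $\delta$; matching it against the lower bound $\varepsilon^{-(p-1)/\gamma_m}$ of Proposition~\ref{lower-bound} shows the estimate is sharp up to this $\delta$. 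Everything else is the same rescaling computation already carried out for Theorem~\ref{Blow-up results}, now read quantitatively rather than asymptotically.
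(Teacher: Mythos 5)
Your proposal is correct and follows essentially the same route as the paper: both arguments reuse the test-function inequality \eqref{equation1.5.7} with $K=1$, couple $R^{2(\sigma-\sigma_1)}$ to $T_m(\varepsilon)$, and for $m>1$ absorb the logarithm from \eqref{estimate2.3.100} into the arbitrarily small loss $\delta$. Your explicit verification of the identity $2(\sigma-\sigma_1)\gamma_m=(p-1)\bigl(2\sigma p'-\tfrac{n}{m}-2(\sigma-\sigma_1)\bigr)$ and your remark on the admissibility of $\Phi_R$ for a merely local solution are details the paper leaves implicit, but they do not change the argument.
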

\begin{proof}
   In the case $m=1$, using Lebesgue's dominated convergence theorem we gain 
    \begin{align*}
        \lim_{R\to\infty} \int_{\mathbb{R}^n} u_1(x) \varphi_R(x) dx = \int_{\mathbb{R}^n} u_1(x) dx.
    \end{align*}
    For this reason, with $R_1$
  being a sufficiently large positive real number, we have the following relation:
  \begin{align*}
      \int_{\mathbb{R}^n} u_1(x) \varphi_R(x) dx > \frac{1}{2} \int_{\mathbb{R}^n} u_1(x) dx =: C_{u_1} &\text{ for all } R > R_1.
  \end{align*}
  From the relation (\ref{equation1.5.7}), we take $K = 1$ and pass $R \to T_{1}(\varepsilon)^{\frac{1}{2(\sigma-\sigma_1)}}$ to obtain
  \begin{align*}
      C_{u_1} \varepsilon \leq C T_1(\varepsilon)^{\frac{-2\sigma p'+n+2(\sigma-\sigma_1)}{2(\sigma-\sigma_1)}}.
  \end{align*}
   Therefore, we may claim the estimate (\ref{estimate5.2.1}) when $m=1$. In the case $m >1$, we will again use the estimate (\ref{estimate2.3.100}) by letting $R \to T_{m}(\varepsilon)^{\frac{1}{2(\sigma-\sigma_1)}}$ to conclude the desired estimate.
\end{proof}

\begin{remark}
\fontshape{n}
\selectfont
    Linking the achieved estimates \eqref{Lower_Lifespan} and \eqref{estimate5.2.1} in Propositions \ref{lower-bound} and \ref{upper-bound} one recognizes that the lifespan estimates for solutions to the Cauchy problem \eqref{Main.Eq.1} in the subcritical case, i.e. the condition \eqref{subcritical} occurs, are determined by the following relations:
    $$T_1(\varepsilon) \sim \varepsilon^{-\frac{p-1}{\gamma_1}}$$
    for $m=1$, and moreover,
    $$\varepsilon^{-\frac{p-1}{\gamma_m}} \lesssim T_m(\varepsilon) \lesssim \varepsilon^{-\frac{p-1}{\gamma_m}-\delta} $$
    for $m \in (1, 2]$, where $\delta$ is an arbitrarily small positive real number. The sharp lifespan estimates for solutions in the subcritical case for $m \in (1, 2]$ is really a challenging problem, which will be partially investigated in our forthcoming work.
\end{remark}
\section*{Acknowledgments}
This research is funded by Vietnam National Foundation for Science and Technology Development (NAFOSTED) under grant number 101.02-2023.12.


\end{document}